\newtheorem{theorem}{\bf Theorem}[section]
\newtheorem{lemma}[theorem]{\bf Lemma}
\newtheorem{proposition}[theorem]{\bf Proposition}
\newtheorem{corollary}[theorem]{\bf Corollary}
\newtheorem{example}[theorem]{\bf Example}
\newcommand{\pr }{\mathrm{Pr} }
\newcommand{\K }{K_0 }
\begin{document}
\title[Commuting probability]{Commuting probability for approximate subgroups of a finite group}

\author{Eloisa Detomi}
\address{Dipartimento di Matematica \lq\lq Tullio Levi-Civita\rq\rq, Universit\`a di Padova, Via Trieste 63, 35121 Padova, Italy \\
}
\email{eloisa.detomi@unipd.it}

\author{Marta Morigi}
\address{Dipartimento di Matematica, Universit\`a di Bologna\\
Piazza di Porta San Donato 5 \\ 40126 Bologna \\ Italy}
\email{marta.morigi@unibo.it}
\author{Pavel Shumyatsky}
\address{Department of Mathematics, University of Brasilia\\
Brasilia-DF \\ 70910-900 Brazil}
\email{pavel@unb.br}

\subjclass[2020]{20E45; 20P05; 20N99} 
\keywords{Commuting probability, approximate subgroups, conjugacy classes, centralizers}

\begin{abstract} 
 For subsets $X,Y$ of a finite group $G$, we write $\pr(X,Y)$ for the probability that two random elements $x\in X$ and $y\in Y$ commute. This paper addresses the relation between the structure of an approximate subgroup $A\subseteq G$ and the probabilities $\pr(A,G)$ and $\pr(A,A)$. The following results are obtained.\medskip
 
 \noindent Theorem 1.1: Let $A$ be a $K$-approximate subgroup of a finite group $G$, and let $\pr(A,G)\geq\epsilon>0$. There are two $(\epsilon, K)$-bounded positive numbers $\gamma$ and $\K$ such that $G$ contains a normal subgroup $T$ and a $\K$-approximate subgroup $B$ such that $|A\cap B|\ge \gamma\; {\mathrm{max}}\{|A|,|B|\}$ while the index $[G:T]$ and the order of the commutator subgroup $[T,\langle B \rangle]$ are $(\epsilon, K)$-bounded.
\medskip

\noindent Theorem 1.2:  Let $A$ be a $K$-approximate subgroup of a finite group $G$, and let $\pr(A,A)\geq\epsilon>0$. There are two $(\epsilon, K)$-bounded positive numbers $\gamma$ and $s$, and a subgroup $C\leq G$ such that 
  $|C \cap A^2| > \gamma |A|$ and $|C'|\leq s$. In particular, $A$ is contained in the union of at most $\gamma^{-1}K^2$ left cosets of the  subgroup $C$.\medskip
  
It is also shown that the above results admit approximate converses.
 \end{abstract}
\maketitle
\section{Introduction}
If $G$ is a finite group and $X,Y$ are subsets of $G$, we write $\pr(X,Y)$ for the probability that two random elements $x\in X$ and $y\in Y$ commute. Thus,
\[ \pr(X,Y) =\frac{ | \{ (x,y) \in X\times Y \mid xy=yx \} |}{|X|\,|Y|}.\]
The number $\pr(G,G)$ is called the commuting probability of $G$.  It is well-known that $\pr(G,G)\leq5/8$ for any nonabelian group $G$. Another important result is the theorem of P. M. Neumann \cite{neumann} which states that 
 if $G$ is a finite group and $\epsilon$ is  a positive number such that $\pr(G,G)\geq\epsilon$,  
 then $G$ has a nilpotent normal subgroup $R$ of nilpotency class at most $2$ such that both the index $[G:R]$ 
and the order of the commutator subgroup $[R,R]$ are $\epsilon$-bounded  (see also \cite{eberhard}).

Throughout the article we use the expression ``$(a,b,\dots)$-bounded" to mean that a quantity is bounded from above by a number depending only on the parameters $a,b,\dots$.

There are several recent papers studying $\pr(H,G)$, where $H$ is a subgroup of $G$  (see for example \cite{DS,Erf,nath}).
In particular, it was proved in  \cite[Proposition 1.2]{DS}  that if  $H$ is a subgroup of a finite group $G$ and $\pr(H,G)\geq\epsilon>0$, 
 then there is a normal subgroup $T\leq G$ and a subgroup $B\leq H$ such that the indices $[G:T]$ and $[H:B]$, and the order of the commutator subgroup $[T,B]$  are $\epsilon$-bounded. 

Lately there also has been a considerable interest in studying approximate subgroups of finite groups.  

Let $K$ be a positive real number.  A subset $A$ of a finite group $G$ is said to be a $K$-approximate subgroup of $G$, or simply a $K$-approximate  
group, if $A$ contains $1$ and the inverse of each of its elements, and if there exists $E \subseteq G$ with $|E| \le K$
such that $A^2\subseteq EA$.

Here and throughout, given a positive integer $j$ and a subset $X$ of a group $G$, we write $X^j$ for the set of all products $x_1\dots x_j$, where $x_i\in X$. The definition of approximate subgroups was introduced by Tao in \cite{tao}. Since then many important results on the subject have been established. In particular, Breuillard, Green and Tao essentially described the structure of finite approximate subgroups \cite{bgt}. The reader is referred to the book \cite{toint2}  for detailed information on these developments.

In the present paper we examine the relation between the structure of an approximate subgroup and the commuting probability. Using the well known analogy between approximate groups and groups, we aim at extending the above mentioned group-theoretical results to approximate subgroups. On the one side, we employ the group-theoretical machinery already available, and, on the other side, ad hoc techniques for approximate subgroups.

Proposition 1.2  in \cite{DS} says, roughly speaking, that  a subgroup $H$ of a finite group $G$ has many commuting elements with $G$ if and only if $H$ has a large subgroup which almost commutes with a large normal subgroup of $G$.  Replacing $H$ with an approximate subgroup $A$  we obtain that $A$ has many commuting elements with $G$ if and only if it is commensurate with another approximate subgroup $B$ which generates a subgroup  $H$ of $G$ with the same properties as in the aforementioned proposition. More formally, our result is as follows.

\begin{theorem}\label{main} 
Let   $A$ be a $K$-approximate subgroup of a finite group $G$, and let $\pr(A,G)\geq\epsilon>0$. 
 There are two $(\epsilon, K)$-bounded positive numbers $\gamma$ and $\K$
 such that $G$ contains a normal subgroup $T$ and a $\K$-approximate subgroup $B$ such that 
 \begin{enumerate}[(i)] 
  \item $|A\cap B|\ge \gamma\; {\mathrm{max}}\{|A|,|B|\}$, and
 \item the index $[G:T]$ and the order of the subgroup $[T,\langle B \rangle]$ are both $(\epsilon, K)$-bounded.
\end{enumerate} 
\end{theorem}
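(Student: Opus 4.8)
\noindent\textit{Plan of proof.}
I would reduce matters to \cite[Proposition 1.2]{DS} after replacing $A$ by a genuine approximate subgroup situated inside the set of elements of $G$ with few conjugates. Put $n=\lfloor 2/\epsilon\rfloor$. If $|A|$ is $(\epsilon,K)$-bounded there is nothing to do: take $B=\{1\}$ and $T=G$, so $[G:T]=1$, $[T,\langle B\rangle]=1$, and $|A\cap B|=1\ge\gamma|A|$ with $\gamma=|A|^{-1}$. So assume $|A|$ is large. Rewriting the hypothesis as $\frac1{|A|}\sum_{a\in A}|a^G|^{-1}\ge\epsilon$ and averaging, the set $A_0=\{a\in A:|a^G|\le n\}$ satisfies $|A_0|\ge\frac\epsilon2|A|$; it is symmetric and contains $1$. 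Since $A^{j}\subseteq E^{j-1}A$ for some $E$ with $|E|\le K$, we get $|A_0^{\,j}|\le|A^{j}|\le K^{j-1}|A|\le\frac{2K^{j-1}}{\epsilon}|A_0|$, so $A_0$ has $(\epsilon,K)$-bounded tripling.

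Next I would invoke the standard Ruzsa--Tao theory of sets of small tripling (see \cite{toint2}): such a set is commensurable with, and contained in a bounded power of, a genuine approximate subgroup. This produces an $(\epsilon,K)$-bounded number $\K$, a $\K$-approximate subgroup $B$ with $B\subseteq A_0^{\,c}$ for an $(\epsilon,K)$-bounded $c$, and an $(\epsilon,K)$-bounded $\gamma_0$ with $|A_0\cap B|\ge\gamma_0\max\{|A_0|,|B|\}$. Combined with $|A_0|\ge\frac\epsilon2|A|$ this yields (i) after adjusting the constant. The inclusion $B\subseteq A_0^{\,c}$ also forces every $b\in B$ to satisfy $|b^G|\le n^{c}=:m$, an $(\epsilon,K)$-bounded number; in particular $\langle B\rangle$ is contained in the characteristic normal subgroup $R:=\langle\,x\in G:|x^G|\le n\,\rangle$.

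It remains to produce $T$, and this is the main point. Note first that $R$ is generated by elements with few conjugates but need not itself consist of such elements, so one cannot apply B.\,H.~Neumann's BFC theorem to $R$; similarly $T=C_G(\langle B\rangle)$ may have unbounded index, while $T=G$ fails as soon as $[G,\langle B\rangle]$ is unbounded. The plan is to apply \cite[Proposition 1.2]{DS} to the subgroup $R$. For this one must show that $\pr(R,G)$ is $(\epsilon,K)$-bounded below: here one uses that the large, bounded-doubling set $A_0$ lies inside the ``few conjugates'' part of $R$ to conclude that this part is a positive fraction of $R$ (equivalently, that $R$ is a bounded power of $B_n:=\{x:|x^G|\le n\}$), whence $\pr(R,G)\ge\frac1{|R|}\sum_{x\in B_n}|x^G|^{-1}\ge\frac{|B_n|}{n|R|}$ is bounded below. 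Proposition 1.2 then yields a normal subgroup $T\trianglelefteq G$ and a subgroup $B_1\le R$ with $[G:T]$, $[R:B_1]$ and $|[T,B_1]|$ all $(\epsilon,K)$-bounded. Finally one transfers the commutator bound from $B_1$ to $\langle B\rangle$: since $[R:B_1]$ is bounded one may, after replacing $B$ by a commensurable bounded approximate subgroup contained in $B_1$ (intersecting an approximate subgroup with a bounded-index subgroup of the group it generates again gives a bounded approximate subgroup, and does not spoil~(i)), assume $\langle B\rangle\le B_1$, so that $|[T,\langle B\rangle]|\le|[T,B_1]|$ is $(\epsilon,K)$-bounded, as required.

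The step I expect to be the real obstacle is the lower bound for $\pr(R,G)$ in the case $|A|$ large, i.e.\ ruling out the possibility that the set of elements with at most $n$ conjugates is a vanishingly small fraction of the subgroup it generates. A ``thin'' bounded-doubling set can have $\langle\cdot\rangle$ far larger than itself, and the point is that the presence of a large bounded-doubling subset $A_0$ inside $B_n$ forbids this; establishing the required quantitative version is precisely where the ad hoc techniques for approximate subgroups, rather than a transcription of the proof of \cite[Proposition 1.2]{DS}, are needed.
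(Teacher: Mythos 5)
Your strategy starts out exactly as the paper's proof does: restrict to the set $A_0$ of elements of $A$ with at most $2/\epsilon$ conjugates, show it is large and of bounded tripling, and upgrade it to an approximate subgroup $B$ (the paper simply takes $B=A_0^2$ via \cite[Proposition 2.5.5]{toint2}). But the step you yourself flag as ``the real obstacle'' is a genuine gap, and the route you sketch for closing it cannot work. You propose to apply \cite[Proposition 1.2]{DS} to $R=\langle B_n\rangle$ where $B_n=\{x\in G: |x^G|\le n\}$, which requires $\pr(R,G)$ to be bounded below; your suggested bound $\pr(R,G)\ge |B_n|/(n|R|)$ needs $B_n$ to be a positive proportion of $R$, and the presence of a large bounded-doubling set inside $B_n$ does not force this. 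Concretely, take $G=S_3^{k}$ and $A=S_3\times\{1\}^{k-1}$, a genuine subgroup (hence a $1$-approximate subgroup) with $\pr(A,G)\ge 1/3$. For $n=6$ the set $B_n$ consists of tuples whose coordinatewise class sizes multiply to at most $6$, so $|B_n|$ is polynomial in $k$, while $R=\langle B_n\rangle=G$ has order $6^k$ and $\pr(R,G)=2^{-k}$. So no lower bound in terms of $\epsilon$ and $K$ exists. Replacing $R$ by $\langle A_0\rangle$ or $\langle B\rangle$ does not help either: an approximate subgroup can generate a group of unboundedly larger order (a short symmetric progression in a large cyclic group, or the paper's Example \ref{example1}), so knowing only that a \emph{generating set} lies in $B_n$ gives no lower bound on the commuting probability of the generated subgroup.

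What is actually needed, and what the paper proves in Proposition \ref{tech}, is that \emph{every} element of $\langle B\rangle$, not just every generator, has an $(\epsilon,K)$-bounded number of conjugates; then Lemma \ref{lem3} applies to $\langle B\rangle$ directly, since $\pr(\langle B\rangle,G)\ge 1/m$ follows termwise with no density comparison at all. The mechanism is the covering set: choosing a minimal $E$ with $B^2\subseteq EB$ and $|E|\le \K$, each $e\in E$ is a product of at most three elements of $B$ and hence has boundedly many conjugates; since $E$ itself has bounded size, $C_G(E)$ has bounded index (this is the $U=G$ case of Lemma \ref{Aconj}; the general coset-covering version is needed for Theorem \ref{main2}), so every element of $\langle E\rangle$ has a bounded class, and $\langle B\rangle\subseteq\langle E\rangle B$ transfers the bound to all of $\langle B\rangle$. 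This is precisely the ad hoc approximate-subgroup input you correctly sensed was missing; without it the reduction to \cite[Proposition 1.2]{DS} does not go through. Your remaining steps (the extraction of $A_0$, the bounded tripling, and the bookkeeping for $\gamma$) do match the paper's argument, and once the conjugacy bound on all of $\langle B\rangle$ is in place your final transfer of the commutator bound becomes unnecessary, since Lemma \ref{lem3} already produces $T$ with $[T,\langle B\rangle]$ of bounded order.
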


Throughout, $\langle X\rangle$ denotes the subgroup generated by a subset $X$ of a group.

Next, we study approximate subgroups $A$ such that $\pr(A,A)\geq\epsilon>0$. Roughly speaking, Neumann's theorem says that if a finite group has many commuting elements then it has a large subgroup which is almost abelian, in the sense that it has a small commutator subgroup. It turns out that if $A$ is an approximate subgroup with  many commuting elements then a big part of $A^2$ is contained in a subgroup $C$ with small commutator subgroup, and $A$ itself is contained in boundedly many cosets of $C$.

\begin{theorem}\label{main2} 
Let $A$ be a $K$-approximate subgroup of a finite group $G$ and assume that $\pr(A,A)\geq\epsilon>0$. There are two $(\epsilon, K)$-bounded positive numbers $\gamma$ and $s$, and a subgroup $C\leq G$ such that 
 \begin{enumerate}[(i)]  
  \item $|C \cap A^2| > \gamma |A|$, and
  \item the commutator subgroup of $C$ is of order at most $s$. 
\end{enumerate} 
	Moreover $A$ is contained in the union of at most $\gamma^{-1}K^2$ left cosets of the  group $C$.
\end{theorem}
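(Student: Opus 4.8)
The plan is to produce a subgroup $C$ with $(\epsilon,K)$-boundedly small derived subgroup inside which $A^2$ is heavily concentrated, and then to read off both assertions by elementary covering arguments. Since we have no control on $\pr(A,G)$ here, one cannot simply invoke Theorem~\ref{main} and must argue from scratch.

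\emph{A reduction.} It suffices to exhibit \emph{some} subgroup $C_0\le G$, an $(\epsilon,K)$-bounded integer $m$, and an $(\epsilon,K)$-bounded $\gamma_0>0$ with $|C_0'|$ bounded and $|C_0\cap A^m|\ge\gamma_0|A|$. Indeed, put $S=C_0\cap A^m$; then $|A\cdot S|\le|A^{m+1}|\le K^m|A|\le(K^m/\gamma_0)|S|$, so by Ruzsa's covering lemma there is $Y\subseteq A$ with $|Y|\le K^m/\gamma_0$ and $A\subseteq YSS^{-1}\subseteq YC_0$. Some coset $y_0C_0$ then meets $A$ in at least $|A|/|Y|$ points; fixing $a_0$ in it, the set $a_0^{-1}(A\cap y_0C_0)$ lies in $A^{-1}A=A^2$ and, since $(y_0c)^{-1}(y_0c')=c^{-1}c'\in C_0$, also in $C_0$, so $|A^2\cap C_0|\ge|A|/|Y|=:\gamma|A|$. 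Thus $C:=C_0$ satisfies (i) and (ii), and a second application of Ruzsa covering (now with $|A\cdot(A^2\cap C)|\le|A^3|\le K^2|A|\le (K^2/\gamma)|A^2\cap C|$) shows $A$ lies in at most $\gamma^{-1}K^2$ left cosets of $C$.

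\emph{Extracting centralizers.} With $C_A(x)=A\cap C_G(x)$, the hypothesis reads $\sum_{x\in A}|C_A(x)|=|A|^2\pr(A,A)\ge\epsilon|A|^2$. For a positive integer $r$, convexity of $t\mapsto t^r$ gives $\sum_{x\in A}|C_A(x)|^r\ge|A|\bigl(|A|^{-1}\sum_x|C_A(x)|\bigr)^r\ge\epsilon^r|A|^{r+1}$, and writing $|C_A(x)|^r=|\{\vec y\in A^r:\ x\in C_G(\langle y_1,\dots,y_r\rangle)\}|$ and interchanging the two sums,
\[
\sum_{\vec y\in A^r}\bigl|\,A\cap C_G(\langle y_1,\dots,y_r\rangle)\,\bigr|\ \ge\ \epsilon^r|A|^{r+1}.
\]
Hence a proportion at least $\epsilon^r/2$ of the tuples $\vec y\in A^r$ satisfy $|A\cap C_G(\langle\vec y\rangle)|\ge(\epsilon^r/2)|A|$. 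So for every bounded $r$ there is a subgroup $C=C_G(\langle y_1,\dots,y_r\rangle)$ with $y_i\in A$ meeting $A$ (a fortiori $A^2$) in a positive, $(\epsilon,r)$-bounded, proportion of $|A|$; and since $A\cap C$ is this large in $A$, the set $A^2\cap C$ has bounded doubling, hence is a $K'$-approximate subgroup of $C$ (with $K'$ bounded) of size comparable to $|A|$. By the reduction, it remains only to choose $r$ and the tuple so that this $C$ has $(\epsilon,K)$-boundedly small derived subgroup.

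\emph{The near-abelian subgroup, and the main obstacle.} Here the intended strategy is to refine the tuple — replacing $C$ by $C\cap C_G(x)$ for a suitable $x\in A$ — so as to trap a positive proportion of $A$ in a genuinely commutative section of $C$, and then invoke Neumann's theorem: as long as $\pr(A\cap C,A\cap C)$ is not bounded below, a counting argument shows most $x\in A\cap C$ have small $C_{A\cap C}(x)$, so a suitable such $x$ strictly refines $C$ while keeping $A\cap C$ a positive proportion of $A$; when the process halts, $\pr(A\cap C,A\cap C)$ is bounded below, and from this one extracts a genuine subgroup (or section) carrying most of $A\cap C$ to which Neumann's theorem applies, yielding a normal subgroup $R$ with $|R'|$ bounded and $A\cap C$ inside boundedly many cosets of $R$; a final coset-pigeonhole gives $|R\cap(A\cap C)^2|\ge\gamma_0|A|$, so $C_0:=R$ works. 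Two points make this the hard part. The first, and main, obstacle is to bound the \emph{number} of refinement steps in terms of $\epsilon$ and $K$: the chain $C_G(\langle y_1\rangle)\supseteq C_G(\langle y_1,y_2\rangle)\supseteq\cdots$ may a priori be long, each step only guarantees $|A\cap C|\ge(\epsilon^r/2)|A|$ after $r$ steps, and one needs a genuine bound — presumably via a rank- or breadth-type invariant of $C$ controlled by the bounded approximate subgroup $A^2\cap C$ — on how many strictly refining steps are possible. The second is to correctly identify, once $\pr(A\cap C,A\cap C)$ is large, the genuine subgroup to which Neumann's theorem should be applied, since large commuting probability of an approximate group need not make the group it generates almost abelian. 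Equivalently, one must show that among the good $r$-tuples, with $r$ bounded in terms of $\epsilon$ and $K$, there is one with $\pr\bigl(A\cap C_G(\langle\vec y\rangle),A\cap C_G(\langle\vec y\rangle)\bigr)$ bounded below and with the associated centralizer near-abelian; this is the technical heart of the argument.
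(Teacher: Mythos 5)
Your opening reduction is sound (Ruzsa covering plus a coset pigeonhole correctly converts ``$|C_0\cap A^m|\ge\gamma_0|A|$ for bounded $m$'' into the stated conclusion for $A^2$), and the convexity/double-counting step extracting large centralizer intersections from tuples in $A^r$ is also correct. But the proof has a genuine gap, and you identify it yourself: the entire content of the theorem is in producing a subgroup with $(\epsilon,K)$-boundedly small derived subgroup that captures a positive proportion of $A$, and your third paragraph only describes an ``intended strategy'' together with two unresolved obstacles (bounding the number of centralizer-refinement steps, and locating a genuine subgroup to which Neumann's theorem applies once $\pr(A\cap C,A\cap C)$ is large). Neither obstacle is overcome, so the argument does not close. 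The second obstacle is real and is exactly the phenomenon the paper's Section~2 examples warn about: a large commuting probability for an approximate subgroup does not transfer to the subgroup it generates, so Neumann's theorem cannot be applied directly at the end of your refinement process.

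The paper takes a different route that avoids iterated refinement entirely. First (Proposition~\ref{tech}, applied with $H=U=A$) one keeps only the elements $x\in A$ with $|x^{A}|\le 2K/\epsilon$; a Markov-type count shows these form a set $X$ with $|X|\ge(\epsilon/2)|A|$, so $B=X^2$ has small tripling and is an approximate subgroup. The key combinatorial input is Lemma~\ref{Aconj}: boundedness of $|g_i^{A}|$ for generators propagates to \emph{all} elements of the generated subgroup, so every $y\in\langle B\rangle$ has $|y^{A}|\le m$ with $m$ bounded. Via Lemma~\ref{lem:conj1} this gives $\pr(B,\langle B\rangle)\ge\eta$ for a genuine subgroup $\langle B\rangle$, and a second application of Proposition~\ref{tech} produces a large symmetric $Y\subseteq B$ with $|y^{\langle B\rangle}|\le n$ for all $y\in\langle Y\rangle$. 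At that point the result that does the work is not Neumann's theorem but Shumyatsky's approximate BFC-theorem \cite[Theorem 1.1]{pavel}, which bounds the order of the commutator subgroup of $\langle (Y^2)^{\langle B\rangle}\rangle$ precisely from the hypothesis of uniformly bounded conjugacy classes of an approximate subgroup. This is the missing ingredient your sketch would need: without it (or a substitute), the passage from ``bounded centralizer indices on an approximate subgroup'' to ``bounded derived subgroup of an actual subgroup'' is not available.
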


Furthermore, we show that each of the above theorems admits an ``approximate"  converse.

\begin{proposition}\label{converse-main} 
Let   $A,B$ be subsets and $T$ a subgroup of a finite group $G$. Set $\gamma=|A\cap B| / |A|$,  $n=[G:T]$ and 
 $m=|[T,\langle B \rangle]|$. Then  $\pr(A,G)\geq \frac{\gamma}{nm}$. 
\end{proposition}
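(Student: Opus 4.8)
The plan is to estimate $\pr(A,G)$ from below by keeping only the pairs $(a,g)$ with $a\in A\cap B$ and $g$ in the centralizer of $a$, and then to show that each such centralizer is large because $a$ nearly commutes with the whole of $T$. Write $H=\langle B\rangle$ and $W=[T,H]$, so $|W|=m$. Starting from
\[
\pr(A,G)=\frac{1}{|A|\,|G|}\sum_{a\in A}|C_G(a)|
\]
and discarding every term with $a\notin B$, it is enough to prove that $|C_G(a)|\ge |G|/(nm)$ for each $a\in A\cap B$; summing this over the $|A\cap B|$ elements of $A\cap B$ and using $|A\cap B|=\gamma|A|$ then gives $\pr(A,G)\ge \gamma/(nm)$ at once. (If $A\cap B=\emptyset$ there is nothing to prove, since then $\gamma=0$.)

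So the core step is the centralizer bound. Fix $a\in A\cap B$; then $a\in H$, and I would consider the map $\psi\colon T\to G$, $\psi(t)=[t,a]$. Because $a\in H$, each value $[t,a]$ is one of the standard generators of $[T,H]=W$, so $\psi$ in fact takes values in the set $W$, which has size $m$. The point is to identify the fibres of $\psi$: manipulating the equation $[t_1,a]=[t_2,a]$ directly (equivalently, using the identity $[ct,a]=[c,a]^{t}[t,a]$) shows that $\psi(t_1)=\psi(t_2)$ holds precisely when $t_2t_1^{-1}\in C_G(a)$, i.e. precisely when $t_1,t_2$ lie in the same right coset of $C_T(a):=C_G(a)\cap T$ in $T$. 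Hence $[T:C_T(a)]=|\psi(T)|\le |W|=m$, and therefore
\[
|C_G(a)|\ \ge\ |C_T(a)|\ =\ \frac{|T|}{[T:C_T(a)]}\ \ge\ \frac{|T|}{m}\ =\ \frac{|G|}{nm}.
\]

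Substituting this into the displayed sum finishes the argument: $\pr(A,G)\ge \frac{1}{|A|\,|G|}\,|A\cap B|\cdot\frac{|G|}{nm}=\frac{\gamma}{nm}$. There is no real obstacle here; the only point needing a moment's care is the fibre computation for $\psi$, which is the familiar observation that ``commutator with $a$'' is constant exactly on cosets of $C_G(a)$. It is worth noting that the argument uses only that $a\in\langle B\rangle$ and that $T$ is a subgroup — no normality of $T$ in $G$ is required — which is consistent with the absence of any such hypothesis in the statement.
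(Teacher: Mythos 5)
Your proposal is correct and follows essentially the same route as the paper: restrict the sum to $a\in A\cap B\subseteq\langle B\rangle$, bound $[T:C_T(a)]\le m$ via the injectivity of $t\mapsto [t,a]$ on coset representatives of $C_T(a)$ (the paper phrases this as the bijection $a^T\to\{[a,t]\mid t\in T\}$), and conclude $[G:C_G(a)]\le nm$. The fibre computation and the final summation match the paper's argument step for step, so there is nothing to add.
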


\begin{proposition}\label{converse2} 
Let   $A$ be a $K$-approximate subgroup of a finite group $G$, and let $C\leq G$ be a subgroup.
Set $\gamma=|C \cap A^2|/ |A|$ and $s=|C'|$.
 Then 
 \[\pr (A^2, A^2) \geq  \frac{\gamma^2}{K^4 s}.\]
\end{proposition}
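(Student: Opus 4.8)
The plan is to bound $\pr(A^2,A^2)$ from below by controlling, for each element $x$ of $D:=C\cap A^2$, how many elements of $A^2$ commute with $x$, and then summing over $x\in D$. Note that $D$ is symmetric, contains $1$, and $|D|=\gamma|A|$. First I would record the elementary facts about the $K$-approximate subgroup $A$ that are needed: from $A^2\subseteq EA$ with $|E|\le K$ one gets $|A^2|\le K|A|$; taking inverses gives $A^2\subseteq AF$ with $F=E^{-1}$ and $|F|\le K$; and $A^3=A^2A\subseteq EA\cdot A=EA^2\subseteq E^2A$ with $|E^2|\le K^2$.

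Now fix $x\in D$. Since $x\in C$, its conjugacy class in $C$ is contained in the coset $xC'$, so the subgroup $L:=C_G(x)\cap C$ has index $|x^C|\le|C'|=s$ in $C$. As $D\subseteq C$ is partitioned by the at most $s$ left cosets of $L$ inside $C$, pigeonhole produces a coset $wL$, with $w\in D$, satisfying $|D\cap wL|\ge|D|/s$; translating on the left by $w^{-1}$ we get $|w^{-1}D\cap L|\ge|D|/s$. The crucial manoeuvre is to exploit $D\subseteq A^2\subseteq AF$ to write $w=af$ with $a\in A$, $f\in F$: then $w^{-1}D\subseteq f^{-1}A^{-1}A^2=f^{-1}A^3$, so $f(w^{-1}D\cap L)$ is a subset of $A^3\cap fL$ of size at least $|D|/s$, whence $|A^3\cap fL|\ge|D|/s$.

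Next I would bound $|A^3\cap fL|$ from above. Covering $A^3$ by the $\le K^2$ left translates $gA$ ($g\in E^2$) and intersecting with $fL$, it suffices to note that whenever $gA\cap fL\ne\emptyset$ one may pick $\ell\in gA\cap fL$; then $gA\subseteq\ell A^2$ and, translating by $\ell^{-1}$, the set $gA\cap fL$ lands inside the single left translate $\ell(A^2\cap L)$, so $|gA\cap fL|\le|A^2\cap L|$. Therefore $|A^3\cap fL|\le K^2|A^2\cap L|$. Combining with the previous paragraph, $|A^2\cap L|\ge|D|/(K^2s)$; and since $A^2\cap L\subseteq A^2\cap C_G(x)$, this says that at least $|D|/(K^2s)$ elements of $A^2$ commute with $x$, for every $x\in D$.

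Finally, summing over $x\in D$ and using $D\subseteq A^2$,
\[
\pr(A^2,A^2)\ \ge\ \frac{1}{|A^2|^2}\sum_{x\in D}\bigl|\{y\in A^2:xy=yx\}\bigr|\ \ge\ \frac{|D|^2}{K^2s\,|A^2|^2}\ \ge\ \frac{\gamma^2|A|^2}{K^4s\,|A|^2}\ =\ \frac{\gamma^2}{K^4s},
\]
where at the end I used $|D|=\gamma|A|$ and $|A^2|\le K|A|$. The two covering estimates are routine Ruzsa-type computations; the step I expect to need the most care is the one in the second paragraph, and in particular it is essential there to translate $w$ on the \emph{right} --- i.e. to use $A^2\subseteq AF$ rather than $A^2\subseteq EA$ --- since otherwise $w^{-1}D$ only lies in $A^4$, the covering estimate then costs $K^3$ instead of $K^2$, and one ends up with $K^5$ rather than the asserted $K^4$ in the denominator.
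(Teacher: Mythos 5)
Your argument is correct, and it takes a genuinely different route from the paper's. The paper first invokes the covering lemma (Lemma 7.1.5 of \cite{toint2}) to write $A\subseteq\bigcup_{i\le n}f_iC$ with $n\le\gamma^{-1}K^2$, observes by symmetry that $A$ also lies in $n$ right cosets of $C$, deduces $|g^A|\le n|g^C|\le ns$ for $g\in C\cap A^2$, and then converts this conjugacy-class bound into the centralizer bound via Lemma \ref{lem:conj1}(b), i.e.\ $|C_{A^2}(g)|\ge|A|/|g^A|$. You instead fix $x\in D=C\cap A^2$, pigeonhole $D$ over the at most $s$ cosets of $L=C_G(x)\cap C$ inside $C$, and then transport the resulting dense coset into $A^2\cap L$ by two hand-made Ruzsa-type covering computations. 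Both routes land on the same per-element estimate $|C_{A^2}(x)|\ge\gamma|A|/(K^2s)$ and hence the same constant $\gamma^2/(K^4s)$. What your version buys is self-containedness: you never quote the covering lemma, effectively re-deriving the piece of it you need, and you work with centralizers throughout rather than passing through conjugacy classes. What it costs is length, and the paper's use of the covering number $n$ makes the source of the two factors of $K^2$ slightly more transparent. Your closing remark is also accurate: using the right-coset covering $A^2\subseteq AF$ (rather than $A^2\subseteq EA$) is exactly what keeps $w^{-1}D$ inside a single translate of $A^3$ instead of $A^4$, and the paper's proof makes the corresponding move when it converts the left-coset covering of $A$ by $C$ into a right-coset covering before bounding $|g^A|$.
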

Note that some properties of the commuting probability of subgroups cannot be extended verbatim to approximate subgroups  but they do hold when replacing an approximate subgroup $A$ with $A^2$ (compare Proposition \ref{lem:square} and Example \ref{example1}). 
Clearly, $A^2$ is a $K^2$-approximate subgroup, whenever $A$ is a $K$-approximate subgroup (if $A^2\subseteq EA$, then $A^4\subseteq E A^3 \subseteq E^2 A^2$).

 The next section contains some general comments on the commuting probabilities. In Section 3 we prove Theorems \ref{main} and \ref{main2}. The last section is devoted to Propositions \ref{converse-main} and \ref{converse2}.

\section{General comments on commuting probabilities} 

Note that if $X,Y$ are subsets of a finite group $G$, we have
\[   \pr(X,Y) =\frac 1 { |Y|} \sum_{y\in Y}\frac{|C_{X}(y)| }{|X| }= \frac 1 {|X|} \sum_{x\in X} \frac{|C_{Y}(x)| }{|Y| }.\]

It was proved in \cite[Lemma 2.3]{DS} that if $K$ and $N$ are  subgroups of a finite group $G$, with $N$ normal in $G$, then
 \[\pr(K, G) \le \pr \left(KN/N, G/N \right)\,\,\pr(N\cap K, N).\]
We are interested in finding an ``approximate" variant of this result. We start by looking at symmetric subsets. 
\begin{proposition}\label{lemmaq1}
Let $G$ be a finite group, $N$ a normal subgroup of $G$,  and assume that $A$  is a symmetric subset of $G$.
Then 
\[\pr(A,G)\le \frac{|A^5|}{|A|}\, \pr \left(\frac{AN}{N}, \frac{G}N\right) \, \pr(A^4\cap N,N).\]
\end{proposition}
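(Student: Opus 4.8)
The plan is to adapt the proof of \cite[Lemma 2.3]{DS} to the approximate setting, replacing the single point where the subgroup property of $K$ is used by a counting argument that costs only a bounded number of extra copies of $A$. First I would reduce to centralizers: writing $\overline x=xN$, the projection $G\to G/N$ carries $C_G(x)$ into $C_{G/N}(\overline x)$ with kernel $C_G(x)\cap N=C_N(x)$, so $|C_G(x)|\le |C_N(x)|\,|C_{G/N}(\overline x)|$ for every $x\in G$; summing over $x\in A$ and using the identity of Section 2,
\[ \pr(A,G)\,|A|\,|G| \;=\; \sum_{x\in A}|C_G(x)| \;\le\; \sum_{\overline a\in AN/N}|C_{G/N}(\overline a)|\;\Bigl(\sum_{x\in A,\; xN=\overline a}|C_N(x)|\Bigr). \]

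The crux is a cosetwise bound: for every $\overline a\in AN/N$ I would show $\sum_{x\in A,\,xN=\overline a}|C_N(x)|\le \sum_{v\in A^{2}\cap N}|C_N(v)|$, uniformly in $\overline a$. Fix $x_0\in A$ with $x_0N=\overline a$ and substitute $x=x_0t$, so $t=x_0^{-1}x$ runs injectively over a subset of $A^{2}\cap N$ (here symmetry of $A$ is used). A one-line computation gives $[x_0t,n]=1\iff tnt^{-1}=n^{x_0}$ for $n\in N$, hence $\sum_x|C_N(x)|=\sum_{n\in N}|T_n|$, where $T_n=\{t : tnt^{-1}=n^{x_0}\}$ is the corresponding fibre. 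If $T_n\neq\emptyset$, fix $t_n\in T_n$; for any other $t\in T_n$ one gets $t_n^{-1}t\in C_N(n)$, while at the same time $t_n^{-1}t$ is a product of two elements of $A$ (again using $A=A^{-1}$), so $t_n^{-1}t\in A^{2}\cap C_N(n)$ and $|T_n|\le |A^{2}\cap C_N(n)|$; summing over $n$ and rereading the double sum gives $\sum_{n\in N}|A^{2}\cap C_N(n)|=\sum_{v\in A^{2}\cap N}|C_N(v)|$. This is the step that genuinely departs from \cite{DS}: there the fibre $T_n$ is exactly a coset of $C_N(n)$ inside the subgroup $K\cap N$, whereas here $A$ is only symmetric, so the best one can say is that the "difference" of two fibre elements is a short product of elements of $A$ — and it is precisely this that forces copies of $A^{2}$ (hence, after the final cosmetic step, $A^{4}$ and $A^{5}$) to appear. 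I expect this to be the main obstacle.

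Finally I would assemble the pieces. Plugging the cosetwise bound into the first display and using $\sum_{\overline a\in AN/N}|C_{G/N}(\overline a)|=\pr(AN/N,G/N)\,|AN/N|\,|G/N|$, $\sum_{v\in A^{2}\cap N}|C_N(v)|=\pr(A^{2}\cap N,N)\,|A^{2}\cap N|\,|N|$ and $|N|\,|G/N|=|G|$ yields
\[ \pr(A,G) \;\le\; \frac{|AN/N|\,|A^{2}\cap N|}{|A|}\;\pr(AN/N,G/N)\,\pr(A^{2}\cap N,N). \]
To match the stated form, note that $A^{2}\subseteq A^{4}$ (as $1\in A$), so $A^{2}\cap N\subseteq A^{4}\cap N$ and therefore $|A^{2}\cap N|\,\pr(A^{2}\cap N,N)=\tfrac1{|N|}\sum_{v\in A^{2}\cap N}|C_N(v)|\le \tfrac1{|N|}\sum_{v\in A^{4}\cap N}|C_N(v)|=|A^{4}\cap N|\,\pr(A^{4}\cap N,N)$; moreover the map $(a,w)\mapsto aw$, with $a$ ranging over a fixed transversal of $AN/N$ inside $A$ and $w$ over $A^{4}\cap N$, is injective into $A\cdot A^{4}=A^{5}$, so $|AN/N|\,|A^{4}\cap N|\le |A^{5}|$. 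Substituting these two estimates gives $\pr(A,G)\le \frac{|A^{5}|}{|A|}\,\pr(AN/N,G/N)\,\pr(A^{4}\cap N,N)$, as required. (The displayed intermediate inequality is in fact sharper, with $|A^{3}|$ and $A^{2}\cap N$ in place of $|A^{5}|$ and $A^{4}\cap N$.)
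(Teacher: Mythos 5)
Your proof is correct, and while it follows the same overall skeleton as the paper's argument --- factor $|C_G(x)|\le |C_N(x)|\,|C_{G/N}(xN)|$, decompose $A$ into fibres over $AN/N$, and double-count --- your treatment of the inner sum is genuinely different and in fact sharper. The paper enlarges each fibre $A\cap hN$ to the translate $h(N\cap A^2)$, rewrites the inner sum as $\sum_{y\in N}|C_{h(N\cap A^2)}(y)|$, and then translates $h(N\cap A^2)$ by an element $y_0$ of $C_G(y)$ into $N\cap A^4$; this is exactly where $A^4\cap N$ and ultimately $|A^5|$ enter. You instead keep the exact fibre, parametrize it via $t=x_0^{-1}x\in A^2\cap N$, and bound the fibres $T_n$ of the commuting relation by the difference trick $t_n^{-1}t\in A^2\cap C_N(n)$. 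This lands directly in $A^2\cap N$ and yields the strictly sharper intermediate inequality
\[
\pr(A,G)\le \frac{|AN/N|\,|A^2\cap N|}{|A|}\,\pr(AN/N,G/N)\,\pr(A^2\cap N,N)\le \frac{|A^3|}{|A|}\,\pr(AN/N,G/N)\,\pr(A^2\cap N,N),
\]
which would improve the first bullet of Corollary \ref{corq1} from $K^4\,\pr(AN/N,G/N)\,\pr(A^4\cap N,N)$ to $K^2\,\pr(AN/N,G/N)\,\pr(A^2\cap N,N)$. Your final weakening to match the stated form is fine (your transversal map is exactly Lemma 2.6.3 of Tointon, which the paper also invokes); the only nitpick is that the hypothesis is merely that $A$ is symmetric, not that $1\in A$, but the inclusion $A^2\subseteq A^4$ you need still holds since $1\in A^2$ for any nonempty symmetric $A$.
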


\begin{proof}
Let $\bar G=G/N$ and $\bar A=\{hN\mid h\in A\} =\{ h_1N, \dots, h_rN\}$.  Note that $A$ is contained in the union of the subsets $h_i (N\cap A^2)$,  for $i=1, \dots, r$. Indeed, $ A \subseteq \bigcup_i h_iN$  and if $a \in h_iN$, then $h_i ^{-1}a=n$ for some $n\in N \cap A^2$ and so $a=h_in\in h_i (N\cap A^2)$.
Therefore 
\begin{eqnarray*}
|A|\,|G|\,\pr(A,G) &=&\sum_{x\in A}|C_G(x)| 
 \le \sum_{hN\in \bar A}  \left(  \sum_{x\in h(N \cap A^2)}\frac{|NC_G(x)|}{|N|}|C_N(x) | \right) \\
&\le& \sum_{hN\in \bar A}\left(\sum_{x\in h(N \cap A^2)} |C_{\bar G}(hN)|\,|C_N(x)|\right)\\
&=& \sum_{hN\in \bar A}|C_{\bar G}(hN)|\left(\sum_{x\in h(N \cap A^2)} |C_N(x)|\right)\\
&=&  \sum_{hN\in \bar A} |C_{\bar G}(hN)| \left(\sum_{y\in N} |C_{ h(N \cap A^2)}(y)|\right). 
\end{eqnarray*}
If $C_{ h(N \cap A^2)}(y)\ne \emptyset$  take $y_0\in h(N \cap A^2)\cap C_G(y)$ and observe that $ h(N \cap A^2)\subseteq y_0(N \cap A^4)$. Indeed,  $y_0=hu$ with $u\in N \cap A^2$, whence $h=y_0u^{-1}$ and   $ h (N \cap A^2)\subseteq y_0 u^{-1} (N \cap A^2) \subseteq  y_0(N \cap A^4)$.
Therefore 
\begin{eqnarray*}
C_{ h(N \cap A^2)}(y)&=& h(N \cap A^2)\cap C_G(y) \\
&\subseteq &y_0C_{N \cap A^4}(y),
\end{eqnarray*}
 whence $|C_{ h(N \cap A^2)}(y)|\le |C_{N \cap A^4}(y)|$.
It follows that
\begin{eqnarray*}
 |A|\,|G|\,\pr(A,G)     &\le&
 \left(\sum_{hN\in \bar A}|C_{\bar G}(hN) | \right) \left( \sum_{y\in N} |C_{N \cap A^4}(y)| \right)\\
&=& \Big( |\bar A|\,|\bar G| \, \pr(\bar A,\bar G) \Big)   \Big( |N|\,|N\cap A^4| \, \pr( N \cap A^4,N) \Big).
\end{eqnarray*}
Thus
\[ \pr(A,G)\le\frac{|\bar A|\,| N \cap A^4|}{|A|} \, \pr(\bar A,\bar G)\, \pr( N \cap A^4,N). \]

As $|\bar A|\,|N \cap A^4|\le |A|^5$ by \cite[Lemma 2.6.3]{toint2}, the result follows.
\end{proof}

The previous proposition says, in particular, that in a homomorphic image $\bar G$ of $G$ the commuting probability of $\bar A$ in $\bar G$ is controlled in terms of $\pr(A,G)$. The next lemma deals with the commuting probability $\pr(\bar A,\bar A)$.

\begin{proposition}
 \label{lemmaq2}
Let $G$ be a finite group, $N$ a normal subgroup of $G$,   and assume that $A$  is a symmetric subset of $G$. Then 
\[\pr(A,A)\le \frac{|A^3|\,|A^5|}{|A|^2}
\, \pr\left(\frac{AN}{N}, \frac{AN}{N}\right) \, \pr(A^4\cap N,A^2\cap N).\]
\end{proposition}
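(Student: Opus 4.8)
The plan is to follow the proof of Proposition \ref{lemmaq1}, but to keep more careful track of the part of a centralizer that lies in $N$; this is what produces the asymmetry between $A^4\cap N$ and $A^2\cap N$ in the statement. Write $\bar G=G/N$, let $h_1,\dots,h_r\in A$ be representatives of the cosets in $\bar A$, and recall, exactly as in the proof of Proposition \ref{lemmaq1}, that $A$ is the disjoint union of the sets $A\cap h_iN$, each of which is contained in $h_i(A^2\cap N)$. Starting from $|A|^2\pr(A,A)=\sum_{x\in A}|C_A(x)|$, the first step is to bound $|C_A(x)|$ for fixed $x\in A$. The projection of $C_A(x)$ into $\bar G$ is contained in $C_{\bar A}(\bar x)$, so it suffices to bound each non-empty fibre $C_A(x)\cap gN$. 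The key point is to translate such a fibre by one of its \emph{own} elements $g_0$: then $g_0\in A$ and $g_0$ commutes with $x$, so the map $y\mapsto g_0^{-1}y$ sends the fibre injectively into $N\cap g_0^{-1}A\cap C_G(x)\subseteq C_{A^2\cap N}(x)$, using that $A$ is symmetric. Hence $|C_A(x)|\le|C_{\bar A}(\bar x)|\cdot|C_{A^2\cap N}(x)|$, the exact analogue of the inequality $|C_G(x)|\le|C_{\bar G}(\bar x)|\,|C_N(x)|$ from Proposition \ref{lemmaq1}, but with $C_N(x)$ replaced by the smaller $C_{A^2\cap N}(x)$.

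Summing the bound of the first step over $x\in A$ and grouping the terms according to the coset $h_iN$ containing $x$, one gets
\[|A|^2\pr(A,A)\ \le\ \sum_{i=1}^{r}|C_{\bar A}(h_iN)|\sum_{x\in h_i(A^2\cap N)}|C_{A^2\cap N}(x)|,\]
after enlarging the inner range from $A\cap h_iN$ to $h_i(A^2\cap N)$. For the inner sum we reverse the order of summation, obtaining $\sum_{y\in A^2\cap N}|C_{h_i(A^2\cap N)}(y)|$, and then apply the device used in Proposition \ref{lemmaq1}: if $y\in A^2\cap N$ is centralized by some $x_0\in h_i(A^2\cap N)$, then $h_i(A^2\cap N)\subseteq x_0(A^4\cap N)$, so $|C_{h_i(A^2\cap N)}(y)|\le|C_{A^4\cap N}(y)|$. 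Thus the inner sum is at most $|A^2\cap N|\,|A^4\cap N|\,\pr(A^4\cap N,A^2\cap N)$, a bound not depending on $i$; since $\sum_i|C_{\bar A}(h_iN)|=|\bar A|^2\pr(\bar A,\bar A)$, this yields
\[|A|^2\pr(A,A)\ \le\ |\bar A|^2\,|A^2\cap N|\,|A^4\cap N|\;\pr(\bar A,\bar A)\,\pr(A^4\cap N,A^2\cap N).\]

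Finally, one replaces $|\bar A|\,|A^2\cap N|$ and $|\bar A|\,|A^4\cap N|$ by $|A^3|$ and $|A^5|$ respectively, using the argument behind \cite[Lemma 2.6.3]{toint2}: the $r$ translates $h_i(A^2\cap N)$ are pairwise disjoint and contained in $A^3$, and likewise the translates $h_i(A^4\cap N)$ lie in $A^5$. Multiplying these two estimates and dividing by $|A|^2$ gives the asserted inequality. I expect the only genuinely new point, and hence the main obstacle, to be the refined fibre estimate in the first step: translating a fibre by an arbitrary coset representative yields only $|C_N(x)|$ (as in Proposition \ref{lemmaq1}), while the naive alternative of also covering the centralizing elements $y$ by cosets of $N$ forces $A^4\cap N$ into \emph{both} arguments and is too weak; the asymmetric bound in the statement is obtained precisely because a fibre can be translated by one of its own elements, which simultaneously lies in $A$ and centralizes $x$.
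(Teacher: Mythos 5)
Your proposal is correct and follows essentially the same route as the paper's proof: the fibre bound $|C_A(x)|\le|C_{\bar A}(\bar x)|\,|C_{A^2\cap N}(x)|$ obtained by translating a fibre by one of its own elements is exactly the paper's inequality $C_A(x)\subseteq\bigcup_{hN}hC_{N\cap A^2}(x)$, and the subsequent regrouping, the estimate $|C_{h(N\cap A^2)}(y)|\le|C_{N\cap A^4}(y)|$, and the final appeal to \cite[Lemma 2.6.3]{toint2} all match the published argument.
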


\begin{proof} The proof is very similar to the proof of Proposition \ref{lemmaq1}. Thus we will sketch it, avoiding repetitions.
Let $\bar G=G/N$ and $\bar A=\{hN\mid h\in A\} =\{ h_1N, \dots, h_rN\}$. As shown in  Proposition \ref{lemmaq1}, $A$ is contained in the union of the subsets $h_i (N\cap A^2)$,  for $i=1, \dots, r$.
Moreover, for $x \in A$, 
\[ C_A(x)  \subseteq \bigcup_{hN \in C_A(x)N/N} h C_{N\cap A^2}(x), \] 
 since, for $a, h \in C_A(x)$, the equality  $aN=hN$ implies $h^{-1}a \in N\cap A^2$. 
Therefore, as $C_A(x)N/N \subseteq  C_{\bar{A}}(xN)$, we have 
\begin{equation}
|C_A(x)| \le |C_{\bar{A}}(xN)| C_{N\cap A^2}(x)|.
\end{equation}

It follows that 
\begin{eqnarray*}
|A|\,|A|\,\pr(A,A) &=&\sum_{x\in A}|C_A(x)| 
\le  \sum_{hN\in \bar A}  \left(  \sum_{x\in h(N \cap A^2)}|C_A(x)|\right)\\
&\le& \sum_{hN\in \bar A}\left(\sum_{x\in h(N \cap A^2)} |C_{\bar A}(hN)|\,|C_{N\cap A^2}(x)|\right)\\
&=& \sum_{hN\in \bar A}|C_{\bar A}(hN)|\left(\sum_{x\in h(N \cap A^2)} |C_{N\cap A^2}(x)|\right)\\
&=&  \sum_{hN\in \bar A} |C_{\bar A}(hN)| \left(\sum_{y\in {N\cap A^2}} |C_{ h(N \cap A^2)}(y)|\right). 
\end{eqnarray*}
As in the proof of Proposition \ref{lemmaq1}, the inequality $|C_{ h(N \cap A^2)}(y)|\le |C_{N \cap A^4}(y)|$ holds for every $h\in A$.
Therefore
 \[|A|\,|A|\,\pr(A,A)     \le
 \left(\sum_{hN\in \bar A}|C_{\bar A}(hN) | \right) \left( \sum_{y\in {N\cap A^2}} |C_{N \cap A^4}(y)| \right)\]
\[= \Big( |\bar A|\,|\bar A| \, \pr(\bar A,\bar A) \Big)   \Big( |{N\cap A^2}|\,|N\cap A^4| \, \pr( N \cap A^4,{N\cap A^2}) \Big).
\]
Thus
\[ \pr(A,A)\le\frac{|\bar A|^2\,| N \cap A^4||{N\cap A^2}|}{|A|^2} \, \pr(\bar A,\bar A)\, \pr( N \cap A^4,{N\cap A^2}). \]

As $|\bar A|\,|N \cap A^4|\le |A|^5$ and $|\bar A|\,|N \cap A^2|\le |A|^3$ by \cite[Lemma 2.6.3]{toint2}, the result follows.
\end{proof}

As $|A^n|/|A| \le  K^{n-1}$ for any $K$-approximate subgroup $A$ and for every integer $n\ge 2$,  the following corollary is a straightforward consequence of the above propositions.

\begin{corollary}\label{corq1}
Let $G$ be a finite group, $N$ a normal subgroup of $G$,  and assume that $A\subseteq G$  is a $K$-approximate subgroup.
Then 
\begin{itemize}
\item $\pr(A,G)\le K^4 \, \pr(AN/N,G/N) \, \pr(A^4\cap N,N),$ 

\item $\pr(A,A)\le K^6 \, \pr(AN/N,AN/N) \, \pr(A^4\cap N,A^2\cap N).$
\end{itemize}
 In particular, \begin{itemize}
\item  $ \pr(AN/N,G/N)\ge (1/{K^4})\pr(A,G)$,
\item  $\pr(AN/N,AN/N) \ge (1/K^6)\pr(A,A).$
\end{itemize}
\end{corollary}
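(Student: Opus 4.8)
The corollary is obtained by specializing Propositions~\ref{lemmaq1} and~\ref{lemmaq2} to the case in which the symmetric subset is the $K$-approximate subgroup $A$ itself, and then estimating the combinatorial prefactors. The first thing to check is that these propositions apply at all: a $K$-approximate subgroup contains $1$ and is closed under inversion, hence is symmetric, so no hypothesis is lost in the substitution.

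Next I would record the standard inequality $|A^n|\le K^{n-1}|A|$, valid for every integer $n\ge 2$. Choosing $E$ with $|E|\le K$ and $A^2\subseteq EA$, an easy induction gives $A^n\subseteq E^{n-1}A$ (indeed $A^{n+1}=A^nA\subseteq E^{n-1}A^2\subseteq E^{n-1}EA=E^nA$), whence $|A^n|\le|E^{n-1}|\,|A|\le K^{n-1}|A|$. Plugging $n=5$ into Proposition~\ref{lemmaq1} replaces the factor $|A^5|/|A|$ by the bound $K^4$, which yields the first inequality of the corollary. For the second, Proposition~\ref{lemmaq2} carries the prefactor $|A^3||A^5|/|A|^2$; applying the inequality with $n=3$ and with $n=5$ bounds this by $K^2\cdot K^4=K^6$, giving the second inequality.

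The ``in particular'' statements are then immediate: since every commuting probability is at most $1$, we may discard the factors $\pr(A^4\cap N,N)$ and $\pr(A^4\cap N,A^2\cap N)$ from the two inequalities just obtained, getting $\pr(A,G)\le K^4\,\pr(AN/N,G/N)$ and $\pr(A,A)\le K^6\,\pr(AN/N,AN/N)$, which rearrange to the claimed lower bounds. There is essentially no obstacle in this argument; the only point requiring a moment's attention is verifying that $A$ meets the hypotheses of the two propositions, after which the proof is just a bookkeeping of the powers of $K$.
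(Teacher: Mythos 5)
Your proposal is correct and follows exactly the route the paper intends: the paper itself states the corollary as a ``straightforward consequence'' of Propositions~\ref{lemmaq1} and~\ref{lemmaq2} together with the bound $|A^n|/|A|\le K^{n-1}$, and your verification of that bound, the substitution of $K^4$ and $K^2\cdot K^4=K^6$ for the prefactors, and the use of $\pr(\cdot,\cdot)\le 1$ for the ``in particular'' claims are precisely the expected bookkeeping. No gaps.
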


More generally, the above corollary holds in the case where $A$ is a symmetric set containing $1$ which has small tripling, i.e. $|A^3|\le T|A|$. To see this, observe that $|A^5|\le T^3|A|$ by \cite[Proposition 2.5.3]{toint2}. 
\medskip

If $H_1 \le H_2$ are subgroups of a finite group $G$, then 
\begin{equation}  \pr(H_1,G)\ge \pr(H_2,G) \end{equation}
 (see \cite[Theorem 3.7]{Erf}).
 
As we will show in Example  \ref{example1}, the above inequality does not hold if $H_1$, $H_2$ are $K$-approximate subgroups. 
 Indeed, if $H_1$ is a $K$-approximate subgroup contained in $H_2$,  then  $\pr (H_1,G)$ might be arbitrarily small compared to $\pr (H_2,G)$ (even if $H_2$ is a subgroup). 
 We will show however that $\pr (H_1^2,G)$ is bounded away from zero (see Proposition \ref{lem:square}). 
  In the particular case where  $H_1$  is a subgroup of $G$  and $H_2$  is a $K$-approximate subgroup containing $H_1$, we get 
 \[ \pr(H_1,G)\ge \frac{1}{K} \pr(H_2,G)\]  
 (see Corollary \ref{corH1sub}).
  
 To prove these results, we need a preliminary elementary lemma. 
 
 \begin{lemma}\label{lem:conj1}
   Assume that $G$ is a finite group, $g \in G$  and $A$ is a symmetric subset of $G.$  
   Then
   \begin{itemize}
   \item[(a)]     $|  C_A(g)|\,  |  g^A|    \le |  A^2 |$.     
   \item[(b)]    $|A|\le |C_{A^2}(g) |{|g^A|} $. 
  \end{itemize} 
  \end{lemma}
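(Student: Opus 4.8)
The plan is to prove both inequalities by an orbit--stabilizer count adjusted to the subset $A$: fix in advance, for each $x\in g^A$, a representative $a_x\in A$ with $x=g^{a_x}=a_x^{-1}ga_x$.

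For part (a), I would exhibit an explicit injection
\[
C_A(g)\times g^A\longrightarrow A^2,\qquad (c,x)\longmapsto ca_x,
\]
which is well defined because $c\in C_A(g)\subseteq A$ and $a_x\in A$. If $ca_x=c'a_{x'}$ with $c,c'\in C_A(g)$, then conjugating $g$ by both sides and using that $c,c'$ centralise $g$ gives $x=g^{ca_x}=g^{c'a_{x'}}=x'$; hence $a_x=a_{x'}$, and then $c=c'$. So the map is injective and $|C_A(g)|\,|g^A|\le |A^2|$.

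For part (b), I would instead count the fibres of the conjugation map $A\to g^A$, $a\mapsto g^a$. If $a\in A$ lies in the fibre over $x$, then $g^a=g^{a_x}$ forces $aa_x^{-1}\in C_G(g)$; moreover $aa_x^{-1}\in AA^{-1}=A^2$ since $A$ is symmetric, so $aa_x^{-1}\in C_{A^2}(g)$. As $a\mapsto aa_x^{-1}$ is injective on that fibre, each fibre has at most $|C_{A^2}(g)|$ elements, and summing over $x\in g^A$ gives $|A|\le |g^A|\,|C_{A^2}(g)|$.

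I do not anticipate a genuine obstacle here, since the statement is elementary. The only point that needs care is keeping the auxiliary elements inside the prescribed sets: that $ca_x\in A^2$ in (a), which is immediate as both factors lie in $A$, and that $aa_x^{-1}\in A^2$ in (b), which is exactly where the hypothesis $A=A^{-1}$ is used.
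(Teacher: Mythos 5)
Your proof is correct and is essentially the argument the paper gives: part (a) is the paper's observation that the translates $C_A(g)a_i$ of the centralizer by conjugacy-class representatives are pairwise disjoint subsets of $A^2$, and part (b) is the paper's fibre count noting that $g^a=g^b$ with $a,b\in A$ forces $ab^{-1}\in C_{A^2}(g)$. No issues.
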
 
 \begin{proof}
 Note that if $g^A=\{ g^{a_1}, \dots, g^{a_r} \}$, then the subsets $C_A(g) a_i $, for $i=1, \dots, r$, are pairwise disjoint subsets of $A^2$, so 
 \[    \left|  C_A(g) \right|  \left|  g^A \right| = \left| \bigcup_{1\le i \le r} C_A(g)  a_i \right| \le  \left|  A^2 \right|,  \]
  and (a) holds. 

To prove (b), observe that whenever $a,b \in A$ are such that $g^a=g^b$, we have $ab^{-1} \in C_{G}(g) \cap A^2= C_{A^2}(g)$. 
\end{proof}

The following result  holds for approximate subgroups.

\begin{lemma}\label{lem:conj2}
   Assume that $G$ is a finite group, $g \in G$  and $A\subseteq G$ is a $K$-approximate subgroup. Then
$|g^{A^{n}}| \le K^{n-1}|g^A|$  for every $n \ge 1$.
   
  \end{lemma}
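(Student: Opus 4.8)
The plan is to translate the claim into a statement about cosets of the centralizer of $g$ and then feed in the covering property of a $K$-approximate subgroup. Put $C=C_G(g)$. For $x,x'\in G$ one has $g^{x}=g^{x'}$ if and only if $x(x')^{-1}\in C$, i.e. if and only if $Cx=Cx'$; hence for every subset $X\subseteq G$ the number $|g^{X}|$ is exactly the number of distinct right cosets of $C$ that meet $X$. So the inequality to be proved says precisely that the number of right cosets of $C$ meeting $A^{n}$ is at most $K^{n-1}$ times the number of those meeting $A$.

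Next I would record the standard covering estimate: since $A^{2}\subseteq EA$ with $|E|\le K$, an easy induction gives $A^{n}\subseteq E^{n-1}A$, where $E^{n-1}$ denotes the set of all products of $n-1$ elements of $E$, so $|E^{n-1}|\le K^{n-1}$. The key extra twist is to move the small set to the \emph{right}: as $A$ is symmetric, taking inverses yields $A^{n}=(A^{n})^{-1}\subseteq A\,(E^{n-1})^{-1}$. Consequently every $x\in A^{n}$ can be written $x=ae$ with $a\in A$ and $e\in (E^{n-1})^{-1}$, and then the right coset $Cx=(Ca)\,e$ is determined by the coset $Ca$ (which meets $A$) together with $e$.

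Finally I would finish by a one-line count: the family $\{\,Cx : x\in A^{n}\,\}$ is contained in $\{\,(Ca)e : a\in A,\ e\in (E^{n-1})^{-1}\,\}$, whose size is at most $|\{Ca: a\in A\}|\cdot|(E^{n-1})^{-1}|=|g^{A}|\cdot|E^{n-1}|\le K^{n-1}|g^{A}|$. Hence $|g^{A^{n}}|\le K^{n-1}|g^{A}|$, as claimed; the case $n=1$ is the trivial instance of this argument.

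I do not expect a real obstacle here. The only point that must not be skipped is the use of the symmetry of $A$: without it one only has $A^{n}\subseteq E^{n-1}A$, which would bound $|g^{A^{n}}|$ by $K^{n-1}|A|$ rather than by $K^{n-1}|g^{A}|$, and the latter is what is needed (and is much stronger when $g$ is close to central, so that $|g^{A}|$ is small). One could instead try to combine parts (a) and (b) of Lemma~\ref{lem:conj1}, applied to $A$ and to $A^{n}$ respectively, but that route wastes a factor of roughly $K^{n}$ and so does not reach the sharp exponent.
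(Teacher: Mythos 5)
Your proof is correct and is essentially the paper's argument: both rest on $A^{n}\subseteq E^{n-1}A$ and use the symmetry of $A$ to flip this into $A^{n}\subseteq A(E^{n-1})^{-1}$ (the paper writes each $h\in A^{n}$ as $(ea)^{-1}$ and computes $g^{h}=(g^{a^{-1}})^{e^{-1}}$, which is the same bookkeeping). Your reformulation via right cosets of $C_G(g)$ is just a slightly more explicit way of carrying out the same count, and your closing remark about why symmetry is indispensable matches the point the paper's proof implicitly relies on.
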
 
 \begin{proof}
 As $A^2 \subseteq EA$ where $|E| \le K$, we have $A^n \subseteq E^{n-1}A $. Since $A$ is symmetric, every element $h\in A^n $ can be written as the inverse of an element $ea \in A^n \subseteq E^{n-1}A$, with $e \in E^{n-1}$ and $a \in A$. So 
	\[ g^h=g^{(ea)^{-1}}=(g^{a^{-1}})^{e^{-1}},\] 
	and since there are at most $K^{n-1}$ elements in $E^{n-1}$, we deduce that $|g^{A^{n}}| \le K^{n-1}  |g^A|.$
  \end{proof}

  \begin{proposition}\label{lem:square}
 Let $G$ be a finite group, and let $A_1 \subseteq A_2$ be symmetric subsets of $G$ such that $|A_1^2|\le K|A_1|$ and $|A_2^2|\le K'|A_2|$. Then for any subset $B\subseteq G$ we have
  \[ \pr(A_1^2,B)\ge \frac{1}{K K'} \pr(A_2,B).\]  
 \end{proposition}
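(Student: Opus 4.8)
The plan is to prove the inequality \emph{pointwise over $B$}, using the two halves of Lemma~\ref{lem:conj1} as the bridge between the centralizer $C_{A_1^2}(b)$ and the centralizer $C_{A_2}(b)$. Recall from the start of Section~2 that
\[ \pr(A_1^2,B)=\frac{1}{|B|}\sum_{b\in B}\frac{|C_{A_1^2}(b)|}{|A_1^2|},\qquad \pr(A_2,B)=\frac{1}{|B|}\sum_{b\in B}\frac{|C_{A_2}(b)|}{|A_2|}. \]
Hence it suffices to establish, for every $b\in G$, the estimate
\[ \frac{|C_{A_1^2}(b)|}{|A_1^2|}\ \ge\ \frac{1}{KK'}\,\frac{|C_{A_2}(b)|}{|A_2|}; \]
summing over $b\in B$ and dividing by $|B|$ then gives the proposition, and in particular $B$ may be an arbitrary subset.

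To prove the pointwise bound, fix $b\in G$. Apply Lemma~\ref{lem:conj1}(b) to the symmetric set $A_1$ and the element $b$ to obtain $|A_1|\le |C_{A_1^2}(b)|\,|b^{A_1}|$. Since $A_1\subseteq A_2$ we have $b^{A_1}\subseteq b^{A_2}$, so $|b^{A_1}|\le|b^{A_2}|$, whence $|C_{A_1^2}(b)|\ge |A_1|/|b^{A_2}|$. Now apply Lemma~\ref{lem:conj1}(a) to the symmetric set $A_2$ and $b$ to get $|C_{A_2}(b)|\,|b^{A_2}|\le |A_2^2|\le K'|A_2|$, that is, $1/|b^{A_2}|\ge |C_{A_2}(b)|/(K'|A_2|)$. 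Combining these two inequalities and using $|A_1|\ge |A_1^2|/K$ (from $|A_1^2|\le K|A_1|$) yields
\[ |C_{A_1^2}(b)|\ \ge\ \frac{|A_1|}{|b^{A_2}|}\ \ge\ \frac{|A_1|\,|C_{A_2}(b)|}{K'|A_2|}\ \ge\ \frac{|A_1^2|\,|C_{A_2}(b)|}{KK'\,|A_2|}, \]
which is exactly the desired inequality after dividing through by $|A_1^2|$.

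I do not expect a genuine obstacle here: the argument is a direct chaining of parts (a) and (b) of Lemma~\ref{lem:conj1}, one applied to $A_1$ and the other to $A_2$, together with the doubling bounds. The only points needing a word of care are (i) that $C_{A_1^2}(b)$ is nonempty — it contains $1$ as soon as $A_1\neq\emptyset$, which is implicit since $\pr(A_1^2,B)$ has to be defined — so the first inequality is not vacuous, and (ii) keeping track of which doubling constant, $K$ or $K'$, is attached to which of the two sets throughout the chain.
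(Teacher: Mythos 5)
Your argument is correct and is essentially identical to the paper's proof: both reduce to the pointwise inequality $|C_{A_1^2}(b)|/|A_1^2|\ge |C_{A_2}(b)|/(KK'|A_2|)$ and obtain it by chaining Lemma~\ref{lem:conj1}(b) applied to $A_1$, the inclusion $b^{A_1}\subseteq b^{A_2}$, and Lemma~\ref{lem:conj1}(a) applied to $A_2$, with the doubling constants inserted in the same places. The only (immaterial) difference is that you invoke $|A_1|\ge|A_1^2|/K$ at the end of the chain rather than at the start.
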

\begin{proof}
Let $g \in G$. As  $|A_1^2|\le K|A_1|$, it follows from Lemma \ref{lem:conj1} (b) that  
 \[  \frac{ |C_{A_1^2}(g) | }{|{A_1^2}|} \ge  \frac1{K |g^{A_1}|}. \]
Clearly, as $A_1 \subseteq A_2$, we have $1/{|g^{A_1}|} \ge  1/{|g^{A_2}|}$. Moreover, as $|A_2^2|\le K'|A_2|$, we deduce from Lemma \ref{lem:conj1} (a) that 
\[ \frac{1}{|g^{A_2}|} \ge  \frac{ |C_{A_2}(g) | }{ K' |{A_2}|}. \]
Therefore 
\[ 
 \frac{ |C_{A_1^2}(g) | }{|{A_1^2}|} 
 \ge  \frac1{ K |g^{A_1}|} \ge 
  \frac1{K |g^{A_2}|}  
 \ge \frac{ |C_{A_2}(g) | }{KK' |{A_2}|}. 
\] 
We conclude that 
\[ \pr(A_1^2,B)=\frac{1}{|B|} \sum_{g \in B}  \frac{ |C_{A_1^2}(g) | }{|{A_1^2}|} \ge \frac{1}{KK'|B|} \sum_{g \in B} \frac{ |C_{A_2}(g) | }{|{A_2}|}=
 \frac{1}{K K'} \pr(A_2,B), \]
as claimed. 
\end{proof} 

 \begin{corollary}\label{corH1sub} 
 Let $G$ be a finite group, and  let $H$ be a subgroup contained in a $K$-approximate subgroup $A\subseteq G$. Then for any subset $B\subseteq G$ we have
  \[ \pr(H,B)\ge \frac{1}{K} \pr(A,B).\]  
 \end{corollary}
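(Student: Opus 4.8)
The plan is to obtain this as an immediate special case of Proposition \ref{lem:square}, taking $A_1=H$ and $A_2=A$. The point is that when $A_1$ is a subgroup, the set $A_1^2$ collapses: since $H$ is a subgroup we have $H^2=H$, and in particular $|H^2|=|H|$, so the hypothesis $|A_1^2|\le K_1|A_1|$ of Proposition \ref{lem:square} is satisfied with the constant $K_1=1$.

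Next I would record that $A$ itself meets the other hypothesis with constant $K$: by the definition of a $K$-approximate subgroup there is a set $E\subseteq G$ with $|E|\le K$ and $A^2\subseteq EA$, hence $|A^2|\le|E|\,|A|\le K|A|$. Thus with $A_1=H\subseteq A=A_2$, $K_1=1$ and $K_2=K$, Proposition \ref{lem:square} applies and yields, for every subset $B\subseteq G$,
\[
\pr(H^2,B)\ \ge\ \frac{1}{1\cdot K}\,\pr(A,B).
\]
Finally, using $H^2=H$ once more to rewrite the left-hand side, this is precisely $\pr(H,B)\ge\frac1K\pr(A,B)$, as required.

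I do not expect any real obstacle here; the only thing that needs care is bookkeeping — namely that the role of "$A_1$" in Proposition \ref{lem:square} is played by the subgroup $H$ itself (not by $H^2$), and that the identity $H^2=H$ is used in two places: first to supply the hypothesis of Proposition \ref{lem:square} with the trivial constant, and then to identify $\pr(H^2,B)$ with $\pr(H,B)$ in the conclusion.
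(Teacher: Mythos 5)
Your proof is correct and is exactly the paper's argument: the paper also derives the corollary from Proposition \ref{lem:square} by noting that $H$ is a $1$-approximate subgroup (so $|H^2|=|H|$) and that $H^2=H$ lets one replace $\pr(H^2,B)$ by $\pr(H,B)$ in the conclusion. The only hypothesis you leave implicit is that $H$ and $A$ are symmetric, which holds trivially since $H$ is a subgroup and $A$ is an approximate subgroup.
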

 \begin{proof} The result a straightforward consequence of the previous lemma, taking into account that $H$ is a $1$-approximate subgroup and $H^2=H$. 
 \end{proof}
 
 \begin{example}\label{example1}
{\rm Here we show that for any positive integer $K\ge 2$
 and $\epsilon >0$ there is a finite group $G$ with a subgroup $H$ and a $K$-approximate subgroup
$A\subseteq H$ of size at least $K|H|/2^K$ such that $\pr(A,G) \le \epsilon\;\pr(H,G) $.  In particular,  ${\pr(A,G)}/{\pr(H,G)}$ cannot be bounded away from zero in terms of $|A|/|H|$ and $K$. 
   
Let $V$ be an elementary abelian $2$-group of rank $n\ge K$, and let $g_1,\dots,g_n$ be a basis of $V$. Denote by $g$ the automorphism of $V$ which cyclically permutes $g_1,\dots,g_n$. Consider the natural semi-direct product $N=V\langle g\rangle$ and let $G=N\times U$, where $U$ is a finite abelian group.
Then $Z= Z(G)=\langle g_1g_2\cdots g_n\rangle\times U$. Set $z=|Z|=2\;|U|$ and note that 
\[|G|=n 2^n |U|=n 2^{n-1} z .\] 
Let $k=K-1$ and set
\[A=  \bigcup_{1 \le i \le k} g_i Z\cup \{1\}  \quad \text{and } \quad  H= \langle A \rangle = \langle g_1,\dots,g_k \rangle Z.\] 
Note that $A$ is symmetric because the $g_i$ have order $2$, moreover $A^2\subseteq BA$, where $B=\{1,g_1,\dots, g_k\}$ has size $k+1=K$, so $A$ is a $K$-approximate subgroup.
Moreover,  as $k < n$,  we have 
\[ |A|=kz+1, \quad |H|=2^{k}z, \quad \frac{|A|}{|H|}=\frac{kz+1}{ 2^{k}z} > \frac{k}{ 2^{k}} .\]
 Since $|a^G|=n$ for every $1 \neq a \in A$, we have 
\begin{eqnarray*} 
 \pr(A,G) &=& 
 \frac 1{|A|}\sum_{a\in A}\frac{|C_G(a)|}{|G|} = 
\frac 1{|A|}\left(\sum_{1 \neq a\in A}\frac{|C_G(a)|}{|G|}+\frac{|C_G(1)|}{|G|}\right)\\
&=&\frac 1{|A|}\left(\frac{|A|-1}{n}+1\right) =
\frac 1{kz+1}\left(\frac{kz}{n}+1\right)
< \frac {kz +n}{ kzn}.
\end{eqnarray*}
As $H$ is contained in $VU$, which is an abelian subgroup of index $n$ in $G$, whenever $x \in H$ we have $[G:C_G(x)] \leq n$. So we can estimate $\pr (H,G)$ as follows: 
\begin{eqnarray*} 
 \pr(H,G) &=& \frac 1{|H|}\sum_{x\in H}\frac{|C_G(x)|}{|G|} 
 =\frac 1{|H|}\left(\sum_{x \in H \setminus Z}\frac{|C_G(x)|}{|G|}+\sum_{x \in  Z}\frac{|C_G(x)|}{|G|}\right)\\
&\ge&\frac 1{|H|}\left(\frac{|H|-z}{n}+z\right) = 
\frac 1{2^{k} z}\left(\frac{2^{k}z -z+nz}{n}\right)\\
 &>& \frac 1{2^{k} z}\left(\frac{nz}{n}\right) = \frac 1{2^{k} }.
\end{eqnarray*}
 
Therefore
 \[ \frac{\pr(A,G)}{\pr(H,G)} <  \frac{kz +n}{ kzn} \, 2^{k} = \left({\frac{1}{n} +\frac{1}{kz}}\right)  \, {2^{k}}. \] 

This can be arbitrarily small if $n$ and $|U|$ are chosen large enough. }
\end{example}

\begin{example}\label{example-Y-A}
{\rm 
In the previous example $A$ was quite small compared to the subgroup $H=\langle A \rangle$, as ${|A|}/{|H|}=({kz+1})/({ 2^{k}z}) < ({k+1})/{ 2^{k}}$. 
  Now we show that, in the notation of the  previous example, we can choose  $G$, $A$, and a  $K$-approximate subgroup $A_0\subseteq  G$ containing $A$,  such that $|A|/|A_0| > k/(k+1)$ and $\pr(A, G) \le \pr(A_0,G) $. Moreover the value of $$\pr(A,G)/\pr(A_0,G) $$ cannot be bounded away from zero in terms of $|A|/|A_0|$ and $K$.  

Let $G, A, Z$ be as in Example \ref{example1} and let $A_0=Z \cup A$. Note that $A^2\subseteq BA$ and $A_0^2\subseteq BA_0$, where  $B=\{1,g_1,\dots,g_k\}$, so both $A$ and $A_0$ are $K$-approximate subgroups. 

Note that  $|A|=kz+1$ and $|A_0|=(k+1)z$, where $z=|Z|$, and so  $|A|/|A_0|=(kz+1)/(k+1)z > k/(k+1)$.
We know from the calculations in Example \ref{example1} that
\[
 \pr(A,G) < \frac {kz +n}{kzn} =  \frac 1{n} +\frac {1}{ kz}.
\]
Moreover, as $A_0=A\cup Z$ and $|C_G(y)|/|G| \ge \frac 1 n$ for every $y\in A$ (see Example \ref{example1}),
\begin{eqnarray*} 
 \pr(A_0,G) 
 &=& \frac{1}{|A_0|}\left(\sum_{y\in Z}\frac{|C_G(y)|}{|G|}+\sum_{1 \neq y\in A}\frac{|C_G(y)|}{|G|}\right) \\
&\ge& 
 \frac{1}{ (k+1) z} \left( z + \frac{ kz}{n} \right) \\
 &>& \frac{1}{ k+1}.
\end{eqnarray*}
Therefore 
 \begin{equation}\label{alpha} \frac{\pr(A,G) }{ \pr(A_0,G)}  
  <  \left( \frac 1{n} +\frac {1}{ kz} \right) \left( k+1 \right).
  \end{equation}
If $k$ is fixed and the group $G$ is chosen with $n$ and $|U|=z/2$ arbitrary large, the righthand side of \eqref{alpha} becomes arbitrary small.
}
\end{example}

\section{Proof of the main results} 

We start with two preliminary results, the first one being purely group theoretical.

\begin{lemma}\label{lem3} Let $m\geq1$, and let $G$ be a finite group containing  a subgroup $B$ such that  $[G:C_G(x)]\leq m$ for all  $x\in B$. Then there is a normal subgroup $T\leq G$ such that the index $[G:T]$  and the order of the commutator subgroup $[T,B]$  are $m$-bounded.
\end{lemma}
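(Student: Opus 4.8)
The plan is to produce $T$ as a suitable characteristic-like subgroup built from centralizers of elements of $B$. Since every $x \in B$ satisfies $[G:C_G(x)] \le m$, the conjugacy class $x^G$ has size at most $m$, so $x$ has at most $m$ distinct conjugates. A standard fact (a consequence of a theorem of B.~H.~Neumann on groups covered by finitely many cosets, or directly: an element with $\le m$ conjugates has $\le m!$-bounded ``near-central'' behaviour) tells us that the subgroup $\langle x^G \rangle$ generated by the class of $x$ has a centre of $m$-bounded index, equivalently $[\langle x^G\rangle, \langle x^G\rangle]$ is $m$-bounded and $\langle x^G\rangle$ has an abelian normal subgroup of $m$-bounded index. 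I would set $T_0 = \bigcap_{x \in B} C_G(x)$ — wait, that need not have bounded index since $B$ may be large — so instead the right move is different: I would first pass to $N = \langle x^G : x \in B\rangle$, the normal closure of $B$, and argue that $N$ has an $m$-bounded derived subgroup using a theorem in the spirit of Neumann/Wiegold: if every element of a generating set of the normal subgroup $N$ (namely $\bigcup_x x^G$) has at most $m$ conjugates in $G$, then $[N,N]$ is finite of $m$-bounded order. This is exactly the classical result that a group generated by elements with boundedly many conjugates has a bounded derived subgroup (Neumann), applied inside $G$ to the normal closure.

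Granting that $[N,N]$ has $m$-bounded order, I would then take $T = C_G(N)$. Concretely: let $L = [N,N]$, which is normal in $G$ of $m$-bounded order, and pass to $\bar G = G/L$. In $\bar G$ the image $\bar N$ is abelian, and each $\bar x$ for $x \in B$ still has at most $m$ conjugates, hence $[\bar G : C_{\bar G}(\bar x)] \le m$. Now $\bar N$ is generated by the $\bar x^{\bar G}$, and $C_{\bar G}(\bar N) = \bigcap_{x\in B} C_{\bar G}(\bar x^{\bar G}) = \bigcap_{x \in B} C_{\bar G}(\bar x)$ — but again this is an intersection over a possibly large set. The fix is to observe that $\bar G$ acts on $\bar N$ (abelian) and the image of $\bar G$ in $\mathrm{Aut}(\bar N)$ is generated by the permutation actions on the classes $\bar x^{\bar G}$, each of size $\le m$; more usefully, $\bar G / C_{\bar G}(\bar N)$ embeds into $\prod$ of symmetric-type data, but the clean statement I would invoke is: if $\bar N \trianglelefteq \bar G$ is abelian and generated by $\bar G$-classes each of size $\le m$, then $[\bar G : C_{\bar G}(\bar N)]$ is $m$-bounded. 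This follows because $\bar G$ permutes each generating class, so $C_{\bar G}(\bar x)$ has index $\le m$, and the subgroup $\langle \bar x^{\bar G}\rangle$ is then centralized by a subgroup of index dividing $m!$ (it is centralized by $\bigcap_{g} C_{\bar G}(\bar x^g)$, an intersection of $\le m$ conjugate subgroups of index $\le m$). Taking $\bar N$ itself: it is the product of such pieces, but we only need ONE good subgroup, so set $T/L = C_{\bar G}(\bar N)$, which has $m$-bounded index since... here is the actual subtlety.

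Let me restructure around what we genuinely need. Set $N = $ normal closure of $B$ in $G$; by Neumann's bounded-derived-subgroup theorem $L := [N,N]$ is $m$-bounded and normal in $G$. Put $\bar G = G/L$, $\bar N = N/L$ abelian normal. Define $T = $ the preimage in $G$ of $C_{\bar G}(\bar N)$. Then $[T, N] \le L$, so $[T, \langle B\rangle] \le [T, N] \le L$ has $m$-bounded order, giving the second conclusion once we know $[G:T]$ is $m$-bounded. For the index: $\bar G / C_{\bar G}(\bar N)$ embeds in $\mathrm{Aut}(\bar N)$; writing $\bar N = \langle \bar x^{\bar G} : x\in B\rangle$, the group $\bar G$ preserves each orbit $\bar x^{\bar G}$ of size $\le m$, hence $\bar G/C_{\bar G}(\bar N)$ acts faithfully on a generating set that is a union of orbits of size $\le m$ — but a priori there could be infinitely many orbits. \textbf{This is the main obstacle}: controlling $[G:T]$ when $B$ is large. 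I would overcome it exactly as in \cite[Lemma 2.3]{DS}-style arguments / Neumann's theorem: the faithful action of $\bar G/C_{\bar G}(\bar N)$ on the abelian group $\bar N$ generated by length-$\le m$ orbits forces $\bar G/C_{\bar G}(\bar N)$ to be finite of $m$-bounded order, because every element of it has order dividing $\mathrm{lcm}$ of orbit-lengths $\le m!$ and the group is generated by elements each moving only finitely many generators — more precisely, one reduces to the statement that a group of automorphisms of an abelian group, generated by automorphisms each permuting a finite generating set within orbits of bounded size, is itself of bounded order; this is a known consequence of Neumann's lemma on finite coverings by cosets (or of the fact that $FC$-groups with bounded class sizes have bounded-index centre, applied to $\bar G$ which is an $FC$-group since $B$ generates an $FC$-subgroup and ... ). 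I would present this final reduction carefully, as it is the crux, citing the relevant bounded-index-of-centre theorem for groups with class sizes at most $m$.
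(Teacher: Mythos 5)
There is a genuine gap, and it sits exactly at the point you yourself flagged as ``the crux''. Your plan is: put $N=\langle B^G\rangle$, bound $|N'|$, and take $T$ to be the preimage of $C_{G/N'}(N/N')$. Two problems. A smaller one first: bounding $|N'|$ is \emph{not} the classical Neumann/Wiegold BFC theorem, which requires every element of the group to have a class of size at most $m$ (a product of conjugates of elements of $B$ need not have a small class); it is precisely the recent ``stronger form of Neumann's BFC theorem'' of Acciarri and Shumyatsky, reference \cite{cri} of this paper. So that statement is true, but your justification for it is not. The fatal problem is the index: $[G:T]$ for your $T$ is not $m$-bounded. Take $G$ extraspecial of order $p^{2n+1}$ and $B$ a maximal abelian (hence normal) subgroup, of order $p^{n+1}$. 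Every $x\in B$ satisfies $|x^G|\le p$, so $m=p$ is fixed; here $N=B$, $N'=1$, and $C_G(N)=C_G(B)=B$ has index $p^{n}\to\infty$. Thus your key claim --- ``if $\bar N\trianglelefteq\bar G$ is abelian and generated by classes of size $\le m$, then $[\bar G:C_{\bar G}(\bar N)]$ is $m$-bounded'' --- is false, as are the fallback statements you invoke (bounded class sizes do \emph{not} force a bounded-index centre: the same extraspecial groups are counterexamples; Neumann's theorem bounds $G'$, not $[G:Z(G)]$). Worse, in this example the only admissible $T$ is essentially $G$ itself, with $[T,B]=Z(G)$ of order $p$, which is \emph{not} contained in $N'=1$; so no subgroup centralizing $B$ modulo $N'$ can serve as $T$, and the whole reduction ``pass to $G/N'$ and then centralize'' is structurally wrong, not merely incomplete.

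The paper's proof avoids this obstruction by a different route. From $[G:C_G(x)]\le m$ for all $x\in B$ one gets $\pr(B,G)\ge 1/m$, and Proposition 1.2 of \cite{DS} produces a normal subgroup $R\le G$ of $m$-bounded index and a subgroup $U$ of $m$-bounded index \emph{in $B$} with $|[R,U]|$ bounded; after factoring out the bounded normal closure $[R,U]^G$ one may assume $R\le C_G(U)$, write $B=\langle b_1,\dots,b_r,U\rangle$ with $r$ bounded, and take $T$ to be a bounded-index normal subgroup of $G$ inside $C_G(U)\cap\bigcap_i C_G(b_i)$. The idea you are missing is that one must be allowed to shrink to a bounded-index subgroup $U\le B$ and to absorb a bounded normal subgroup into $[T,B]$, rather than attempting to centralize all of $B$ (let alone $B^G$) modulo $N'$.
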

\begin{proof}
Note that 
\begin{eqnarray*} 
\pr(B,G) \geq\frac 1 m. 
\end{eqnarray*}
By Proposition 1.2 of \cite{DS} there exists a  normal subgroup $R$ of $m$-bounded index in $G$ and a subgroup $U$ of $m$-bounded index in $B$ such that $[R,U]$ has $m$-bounded  order. By Remark 2.6 in \cite{DS}, the normal closure $[R,U]^G$ has $m$-bounded  order.

 Passing to the quotient over $[R, U]^G$, we can assume that $R\le C_G(U)$. 
Take $m$-boundedly many elements $b_1, \dots, b_r$ in $B$ such that  $B=\langle b_1, \dots, b_r, U\rangle$. Since $[G: C_G(b_i)] \le m$ for every $i=1,\dots,r$ and $C_G(U)$ has $m$-bounded index in $G$, the intersection $C$ of $C_G(U)$ and all $r$ subgroups $C_G(b_i)$ has $m$-bounded index in $G$. So $C$ contains a normal subgroup $T$ of $G$ of
$m$-bounded index with $[T, B]=1$. This concludes the proof.\end{proof} 

The following lemma tells us that if $A$ is an approximate subgroup and $g_1,\dots, g_s$ are elements of $G$ having few $A$-conjugates, then any element of the subgroup $\langle g_1,\dots, g_s\rangle$ has few $A$-conjugates.

\begin{lemma}\label{Aconj} Let $G$ be a finite group, $A$ a $K$-approximate subgroup of $G$, and $g_1,\dots,g_s\in G$ elements such that $|g_i^A|\le m$ for every $i=1,\dots,s$. Then there exists a $(K,m,s)$-bounded integer $u$ such that $|g^A|\le u$ for every $g\in\langle g_1,\dots,g_s  \rangle.$	
\end{lemma}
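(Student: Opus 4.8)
The plan is to reduce the statement about the abstract subgroup $\langle g_1,\dots,g_s\rangle$ to a statement about products of the form $g_{i_1}^{\pm 1}\cdots g_{i_t}^{\pm 1}$, and then control the number of $A$-conjugates of such a product inductively in the word length $t$. The key mechanism is the following elementary observation: if $g,h\in G$ satisfy $|g^A|\le p$ and $|h^A|\le q$, then $|(gh)^A|$ is bounded in terms of $p$, $q$ and $K$. To see this, note first that by Lemma \ref{lem:conj1}(a) applied with the symmetric set $A$, $|C_A(g)|\cdot|g^A|\le|A^2|\le K|A|$, so $|C_A(g)|\ge|A|/(Kp)$; likewise $|C_A(h)|\ge|A|/(Kq)$. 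Hence $|C_A(g)\cap C_A(h)|\ge|A|-(|A|-|C_A(g)|)-(|A|-|C_A(h)|)$, i.e. $C_A(g)$ and $C_A(h)$ have an intersection of size at least $|A|(1-1/(Kp)-1/(Kq))$ — which need not be positive. So instead I would argue at the level of the doubled set: by Lemma \ref{lem:conj1}(b), $|A|\le|C_{A^2}(g)|\cdot|g^A|$, giving $|C_{A^2}(g)|\ge|A|/p$ and $|C_{A^2}(h)|\ge|A|/q$, so $|C_{A^2}(g)\cap C_{A^2}(h)|\ge|A|(1-1/p-1/q)$. Since elements of $C_{A^2}(g)\cap C_{A^2}(h)$ centralize $gh$, and since $A^2$ is a $K^2$-approximate subgroup, Lemma \ref{lem:conj1}(a) applied to $A^2$ yields $|(gh)^{A^2}|\le|A^4|/|C_{A^2}(gh)|$; combining with $|(gh)^{A^2}|\ge|(gh)^A|$ and $|A^4|\le K^3|A|$ gives a bound on $|(gh)^A|$, provided $1-1/p-1/q>0$.

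That positivity caveat is the crux, and it is what makes a naive induction on word length fail: after multiplying together many generators the lower bounds on centralizer sizes decay and the intersection estimate becomes vacuous. To get around this I would not track a single product but rather the whole subgroup at once, using the standard trick for such ``bounded $A$-conjugacy'' arguments: pass to the subgroup $D=\bigcap_{g\in\langle g_1,\dots,g_s\rangle}C_G(g)$ — no, rather, I would consider $N=\langle g_1^{A^2},\dots,g_s^{A^2}\rangle$ or work with the action of $A^2$ on the set of elements of $\langle g_1,\dots,g_s\rangle$. A cleaner route: set $H=\langle g_1,\dots,g_s\rangle$ and note that for each $i$ the set $C_{A^2}(g_i)$ has density at least $1/m$ in $A^2$; hence the intersection $\bigcap_{i=1}^s C_{A^2}(g_i)$ — call it $S$ — has density at least $1-s/m$ in $A^2$ when $m>s$, but in general this is again not bounded below. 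The honest fix is to iterate the two-element estimate but in a \emph{balanced binary tree} rather than linearly: from $|g^A|,|h^A|\le u_j$ the two-element lemma gives $|(gh)^A|\le u_{j+1}$ for a $(K,u_j)$-bounded $u_{j+1}$ (now the hypothesis $1-1/u_j-1/u_j>0$ holds once $u_j\ge 3$, which we may assume by enlarging), and any product of $2^j$ generators is reached in $j$ balanced steps. This only handles products of bounded length, so one more ingredient is needed.

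To finish, I would invoke finiteness of $G$ together with a pigeonhole/stabilizer argument to show that elements of \emph{unbounded} word length cannot produce unboundedly many $A$-conjugates. Concretely: let $u_0=\max(3,m)$ and define $u_{j+1}$ from $u_j$ by the two-element estimate, so every element of $H$ expressible as a product of at most $2^k$ generators has at most $u_k$ $A$-conjugates. Now consider the set $X$ of elements of $H$ with $|x^A|\le u_1$. I claim $X$ is closed under multiplication up to a bounded loss, and more importantly, that $\langle X\rangle$ — which contains all the $g_i$, hence equals $H$ — is covered by boundedly many ``$A^2$-centralizer cosets'': indeed the map $x\mapsto C_{A^2}(x)$ sends each $x\in X$ to one of at most $2^{|A^2|}$ subsets, but more usefully, since each $C_{A^2}(x)$ has density $\ge 1/u_1$, two such with overlapping supports of density $>1-2/u_1\cdot\ldots$ — this is circular again, so the genuinely robust argument is: every element of $H$ is a product of $A$-conjugates of the $g_i^{\pm1}$ and elements of $\langle g_1,\dots,g_s\rangle\cap(\text{bounded length words})$, and iterating the balanced two-element estimate a bounded number of times — bounded because $G$ is finite and the subgroup generated stabilizes after $\log_2|G|$ steps, but we need the bound independent of $|G|$... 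The main obstacle, which I expect the authors resolve with a clever choice, is precisely this: producing a bound on $u$ depending only on $K,m,s$ and not on $|G|$, which forces one to avoid any induction whose depth grows with $|G|$ and instead extract, after boundedly many steps, a subgroup $D\le C_{A^2}$-type object of bounded index in some relevant $A$-orbit structure, whence $[A^2:C_{A^2}(g)]$ is bounded uniformly for all $g\in H$ by a single application of Lemma \ref{lem:conj1}.
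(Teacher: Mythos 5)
Your proposal does not close, and you say as much yourself in the final paragraph; the gap is real and it starts already with the ``two-element estimate''. From Lemma \ref{lem:conj1}(b) you get $|C_{A^2}(g)|\ge |A|/p$ and $|C_{A^2}(h)|\ge |A|/q$, i.e.\ two subsets of $A^2$ of density at least $1/(Kp)$ and $1/(Kq)$. Inclusion--exclusion then gives $|C_{A^2}(g)\cap C_{A^2}(h)|\ge |A|/p+|A|/q-|A^2|$, which is nonpositive for all $p,q\ge 2$; your stated bound $|A|(1-1/p-1/q)$ would require the \emph{complements} to have density at most $1/p$ and $1/q$, which is not what the centralizer lower bounds say. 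Two sets of density $1/p$ can simply be disjoint, so no amount of balanced binary bracketing rescues an induction built on intersecting centralizers of individual elements. The subsequent attempts (pigeonhole on $2^{|A^2|}$ subsets, stabilization after $\log_2|G|$ steps) either depend on $|G|$ or are, as you note, circular.

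The idea you are missing is to cover $A$ rather than to multiply elements. If $g_1^A=\{g_1^{a_1},\dots,g_1^{a_m}\}$, then the fibers of the map $a\mapsto g_1^a$ partition $A$ into at most $m$ pieces, and the fiber over $g_1^{a_k}$ is contained in $C_{A^2}(g_1)\,a_k$, because $g_1^a=g_1^{a_k}$ forces $aa_k^{-1}\in C_{A^2}(g_1)$. So $A\subseteq\bigcup_{k\le m}C_{A^2}(g_1)a_k$. Now iterate inside $D=C_{A^2}(g_1)$: by Lemma \ref{lem:conj2} the class $g_2^D\subseteq g_2^{A^2}$ has $(K,m)$-bounded size, so $D$ is covered by boundedly many translates of $C_{D^2}(g_2)\subseteq C_{A^4}(g_1,g_2)$, and after $s$ steps $A$ is covered by a $(K,m,s)$-bounded number $u$ of translates $Cd_1,\dots,Cd_u$ of $C=C_{A^{2^s}}(g_1,\dots,g_s)$. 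Since $C$ centralizes every generator, it centralizes every $g\in\langle g_1,\dots,g_s\rangle$, so $g^{Cd_i}=\{g^{d_i}\}$ is a single point and $|g^A|\le u$. This sidesteps entirely the decay-of-density problem that defeats your word-length induction: the bound $u$ is accumulated multiplicatively over the $s$ generators (not over word length), and the conclusion for arbitrary $g$ in the subgroup is immediate from the covering.
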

\begin{proof}
It is sufficient to show that there exists a $(K,m,s)$-bounded integer $u$ and elements $d_1,\dots,d_u\in G$ such that
\begin{equation}\label{*}A\subseteq\bigcup_{1\leq i\leq u}C_{A^{2^s}}(g_1,\dots,g_s)d_i.\end{equation}

Use induction on $s$.
Let $s=1$ and $g_1^A=\{g_1^{a_{1}},\dots,g_1^{a_{m}}\}$. If $a\in A$, then there exists $k$ such that $g_1^a=g_1^{a_{k}}$, thus $aa_{k}^{-1}\in C_{A^2}(g_1)$ and $a=(aa_{k}^{-1})a_{k}$. Hence, $$A\subseteq\bigcup_{1\leq i\leq m}C_{A^{2}}(g_1)a_{i},$$ as desired.

Now assume that the result is true for $s-1$, that is, there are $(K,m,s)$-boundedly many elements $h_1,\dots,h_v\in G$ such that
\[A\subseteq\bigcup_{1\leq i\leq v}C_{A^{2^{s-1}}}(g_1,\dots,g_{s-1})h_i.\]

Set $D=C_{A^{2^{s-1}}}(g_1,\dots,g_{s-1})$. Note that by Lemma \ref{lem:conj2} the size of the class $g_s^D$ is $(K,m,s)$-bounded. Write $$g_s^D=\{g_s^{b_1},\dots,g_s^{b_r}\}$$ for suitable ${b_1},\dots,{b_r}\in D$.

As above, it follows that   $D\subseteq \bigcup_{1\le i\le r}C_{D^2}(g_{s})b_i$. In particular, we have    

\begin{eqnarray*}
D \subseteq \bigcup_{1\le i\le r}C_{D^2}(g_s)b_i 
  \subseteq  \bigcup_{1\le i\le r}C_{A^{2^s}}(g_1,\dots,g_s)b_i.
 \end{eqnarray*}
 
 We know that 
$A\subseteq\bigcup_{1\leq i\leq v}Dh_i$, whence \eqref{*} follows with $u=rv$.
This establishes the lemma.
\end{proof}

Built on the ideas of  P. M. Neumann's theorem \cite{neumann}, we get the next proposition, which 
 contains the core of the proofs of Theorem \ref{main} and Theorem \ref{main2}. 

\begin{proposition}\label{tech} 
 Let $G$ be a finite group containing a $K_1$-approximate subgroup $H$ and a $K_2$-approximate subgroup $U$ such that $\pr(H,U) \geq\epsilon>0$. Then there exists a symmetric subset $X$ of $H$, with $1 \in X$, 
  and two positive numbers $\K$  and $m$ depending only on $K_1, K_2$ and $\epsilon$ such that 
  \begin{itemize} 
    \item $|X| \ge \frac\epsilon 2 |H|$,
  \item $X^2$ is a $\K$-approximate subgroup of $G$, 
  \item $|X^2| \le \K |X|$, 
  \item  $|y^{U}| \leq m$  for every  $y\in \langle X^2 \rangle$.
  \end{itemize}
\end{proposition}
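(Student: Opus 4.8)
The plan is to mimic the counting step in P.~M. Neumann's argument, but phrased so that it produces a large \emph{symmetric} subset of $H$ rather than a subgroup, and then to feed the resulting finitely many ``small $U$-class'' elements into Lemma~\ref{Aconj}. First I would use the averaging identity from Section~2, $\pr(H,U)=\frac1{|H|}\sum_{x\in H}|C_U(x)|/|U|$, to extract a large subset of $H$ consisting of elements with many centralizing partners in $U$: a Markov-type inequality shows that the set
\[
 Y=\Bigl\{x\in H \ \Big|\ \frac{|C_U(x)|}{|U|}\geq \frac{\epsilon}{2}\Bigr\}
\]
satisfies $|Y|\geq \tfrac{\epsilon}{2}|H|$. (Indeed, if $|Y|<\tfrac{\epsilon}{2}|H|$ then the contribution of $H\setminus Y$ to the sum is at most $\tfrac\epsilon2|H|$ and the contribution of $Y$ is at most $\tfrac\epsilon2|H|$, contradicting $\pr(H,U)\geq\epsilon$.) Then I would symmetrize: replace $Y$ by $X=\{1\}\cup(Y\cap Y^{-1})$. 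Here one must check that passing to $Y\cap Y^{-1}$ does not destroy the size bound; this works because $C_U(x^{-1})=C_U(x)$, so the defining condition is invariant under inversion, and hence $Y$ is already symmetric up to the identity, giving $|X|\geq\tfrac\epsilon2|H|$.

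Next I would control the ``tripling'' of $X$. Since $X\subseteq H$ and $H$ is a $K_1$-approximate subgroup, Lemma~2.6.3 of \cite{toint2} (or the elementary Ruzsa-type bounds) gives $|X^3|\leq|H^3|\leq K_1^2|H|\leq \tfrac{2K_1^2}{\epsilon}|X|$, so $X$ has $(\epsilon,K_1)$-bounded tripling constant $T$. By Proposition~2.5.3 of \cite{toint2} this forces $|X^n|\leq T^{n-1}|X|$ for all $n$, and in particular $X^2$ is an $(\epsilon,K_1)$-approximate subgroup with $|X^2|\leq T|X|=:\widetilde K|X|$; this handles the second and third bullet points with $\widetilde K$ an $(\epsilon,K_1)$-bounded number. (The remark just after Corollary~\ref{corq1} is exactly the statement that small tripling propagates, so I can invoke it directly.)

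It remains to bound $|y^U|$ for $y\in\langle X^2\rangle$. The point of the definition of $X$ is that every $x\in X$ has a large $U$-centralizer, so by Lemma~\ref{lem:conj1}(a) applied with the $K_2$-approximate subgroup $U$ in place of $A$ and the element $x$, we get $|x^U|\leq |U^2|/|C_U(x)|\leq K_2|U|/(\tfrac\epsilon2|U|)=2K_2/\epsilon$ for every $x\in X$. Thus every element of $X$ — hence, by the symmetry of $X$ and a one-line argument using $U^2\subseteq EU$ with $|E|\leq K_2$, also every element of $X^2$, with a slightly worse bound — lies in a $U$-class of $(\epsilon,K_2)$-bounded size. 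Actually it is cleanest to pick finitely many generators: since $|X^2|\leq\widetilde K|X|$ and all elements of $X$ have $U$-class size at most $m_0:=2K_2/\epsilon$, I would choose a generating set of $\langle X^2\rangle$; but a priori $X^2$ may be large, so instead I would apply Lemma~\ref{Aconj} in the right formulation. Lemma~\ref{Aconj} as stated takes \emph{finitely many} elements $g_1,\dots,g_s$; to use it here I would observe that $\langle X^2\rangle=\langle X\rangle$ and note that the conclusion of Lemma~\ref{Aconj} really only uses that $A=X^2$ is an approximate subgroup and that the generators have small $U$-class — so the honest route is: $X^2$ is a $\widetilde K$-approximate subgroup, every element of $X^2$ has $U$-class of size $\leq \widetilde K m_0$, and then repeating the single inductive step of Lemma~\ref{Aconj} (with the roles ``$A$'' $=X^2$ and ``conjugating action'' by $U$, using Lemma~\ref{lem:conj2} to bound $g^{(X^2)^n}$) shows $X^2\subseteq\bigcup_{i}C_{(X^2)^{2}}(\text{products of the }g_j)\,d_i$ up to any fixed word length, yielding a $(\epsilon,K_1,K_2)$-bounded $m$ with $|y^U|\leq m$ for all $y\in\langle X^2\rangle$. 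Setting $\widehat K=\widetilde K$ and this $m$ gives the proposition.

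The main obstacle is the last bullet: Lemma~\ref{Aconj} is phrased for a fixed finite list $g_1,\dots,g_s$ and bounds $U$-classes of the group \emph{they} generate, whereas here I start from the possibly unbounded set $X^2$ and want to conclude for all of $\langle X^2\rangle$. Bridging this requires either (a) extracting a bounded generating set for $\langle X^2\rangle$ modulo nothing — which is not available in general — or, more robustly, (b) re-running the covering argument of Lemma~\ref{Aconj} directly with $A=X^2$: every element of $X^2$ has bounded $U$-class, $X^2$ is an approximate subgroup, and the proof of Lemma~\ref{Aconj} only ever uses these two facts together with Lemma~\ref{lem:conj2}. I expect the cleanest write-up to state and prove a slight strengthening of Lemma~\ref{Aconj} (``if every element of the approximate subgroup $A$ has $U$-class of size $\leq m$, then every element of $\langle A\rangle$ has $U$-class of $(K,m)$-bounded size''), and then the present proposition is immediate. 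Everything else is routine Markov-inequality and Ruzsa-calculus bookkeeping.
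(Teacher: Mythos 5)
Your first three bullets go through essentially as in the paper: the Markov-type count gives a symmetric subset of density $\epsilon/2$ in $H$ (the paper defines it directly by the small-class condition $|x^U|\le 2K_2/\epsilon$ rather than by a large centralizer, but by Lemma \ref{lem:conj1}(a) your set is contained in that one and carries the same size bound), and the tripling/Ruzsa bookkeeping showing that $X^2$ is a $\K$-approximate subgroup with $|X^2|\le\K|X|$ is correct. The problem is the fourth bullet, which you yourself flag as the main obstacle and then do not actually close. Your proposed ``strengthening'' of Lemma \ref{Aconj} (if every element of an approximate subgroup has $U$-class of size at most $m$, then so does every element of the subgroup it generates, up to a bounded factor) is a true statement, but the sketch you give for it does not work: the induction in Lemma \ref{Aconj} is on the number $s$ of prescribed elements $g_1,\dots,g_s$, and the resulting bound $u$ depends on $s$. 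Feeding in all of $X^2$ as the list of elements makes $s=|X^2|$, which is unbounded, so ``re-running the inductive step'' yields no bound. (You also have the roles reversed in your displayed covering: to bound $|y^U|$ one covers the conjugating set $U$ by cosets of $C_{U^{2^s}}(g_1,\dots,g_s)$, not $X^2$ by centralizers inside $(X^2)^2$.)

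The missing idea is a \emph{bounded} covering set. Let $E$ be a minimal set with $(X^2)^2\subseteq EX^2$ and $|E|\le\K$. By minimality every $e\in E$ equals $b_1b_2b_3^{-1}$ for some $b_i\in X^2$, hence is a product of at most three elements of $X^2$ and satisfies $|e^U|\le(2K_2/\epsilon)^6$; since $|E|\le\K$ is bounded, Lemma \ref{Aconj} now applies to the finitely many elements of $E$ (with the approximate subgroup $U$ in the role of $A$) and gives a bounded $n$ with $|g^U|\le n$ for all $g\in\langle E\rangle$. Finally $(X^2)^i\subseteq\langle E\rangle X^2$ for every $i$, so each $y\in\langle X^2\rangle$ factors as $y=ea$ with $e\in\langle E\rangle$ and $a\in X^2$, whence $|y^U|\le|e^U|\,|a^U|\le n\,(2K_2/\epsilon)^2=:m$. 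This is exactly how the paper finishes; without this step your argument for the last bullet is incomplete.
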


\begin{proof}
 Set
 \[X=\{x\in H \mid  |x^U|\leq 2 K_2/\epsilon \}= \{x\in H \mid  1/|x^U|\geq \epsilon/(2K_2) \}.\]
 As $|U^2|\le K_2|U|$, from Lemma \ref{lem:conj1} (a) it follows that, for any $g \in  H \setminus X$,
\[ |C_U(g)| \le \frac{K_2|U|}{|g^U|} \le  \frac{\epsilon K_2|U|}{2 K_2}= \frac{\epsilon |U|}{2} ,\]
whence 
\begin{eqnarray*} 
\epsilon |H|\,|U| &\le  & | \{ (x,y) \in H\times U \mid xy=yx \} |=\sum_{x\in H}|C_U(x)|\\
&\le & \sum_{x \in X} |U| + \sum_{x  \in H \setminus X} \frac{\epsilon}{2}|U| \\
&\le & |X| |U| +(|H| - |X|)\frac{\epsilon}{2}|U|\\
&\le & |X| |U| +\frac{\epsilon}{2} |H|\,|U|.
\end{eqnarray*}  
Therefore $({\epsilon}/{2}) |H| \le  |X|$, that is, 
\[ |X| \ge \alpha |H| \] 
for $\alpha= {\epsilon}/{2}$. 
 Thus, 
\begin{equation*} 
|X^2| \le |H^2| \le K_1|H| \le ( K_1/\alpha) |X| 
 \end{equation*}
and also 
\[ |X^3| \le |H^3| \le K_1^2 |H| \le (K_1^2/\alpha) |X|, \] 
 hence $X$ has tripling  $K_1^2/\alpha$. 
 
 It follows that $B=X^2$ is a  $\K$-approximate subgroup where $\K$ depends on $K_1$ and $\alpha$ only  (see Proposition 2.5.5 in \cite{toint2}). 

Note that $|y^U| \leq (K_2/\alpha)^2$ for every $y \in B$.

 Let $E$ be a minimal subset of $G$ such that $ B^2 \subseteq E B$ and $|E| \le \K$. 
 By minimality of $E$, for every element $e\in E$ there are $b_1,b_2,b_3\in B$ such that $b_1b_2=eb_3$ and so every element $e\in E$  can be written as a product of at most $3$ elements of $B$. 
  Therefore $|e^U| \leq ( K_2/\alpha)^6$ for every $e \in E$. 
	
 It follows from Lemma \ref{Aconj} that there exists a $(\K,K_2, \alpha)$-bounded integer $n$ such that $|g^U|\le n$ for every $g\in \langle E\rangle.$ Note that $B^i\le \langle E\rangle B$ for every $i\ge 2$, and so there exists a $(\K,K_2, \alpha)$-bounded integer $m$ such that  $|y^U| \leq m$  for every  $y\in \langle B \rangle$. 
 As $\K$ and $\alpha$ depend only on $K_1$ and $\epsilon$, the proof is complete. 
\end{proof}

Now we are ready to proceed with the proof of our main results. For the reader's convenience we restate Theorem \ref{main} here.
\medskip 

{\bf Theorem \ref{main}.} {\it 
Let   $A$ be a $K$-approximate subgroup of a finite group $G$ such that  $\pr(A,G)\geq\epsilon>0$. 
 There are two $(\epsilon, K)$-bounded positive numbers $\gamma$ and $\K$ such that $G$ contains a normal subgroup $T$ and a $\K$-approximate subgroup $B$ such that 
  \begin{enumerate}[(i)] 
 \item $|A\cap B|\ge \gamma\; {\mathrm{max}}\{|A|,|B|\}$, and
 \item the index $[G:T]$ and the order of the subgroup $[T,\langle B \rangle]$ are both $(\epsilon, K)$-bounded.
\end{enumerate} 
}

\begin{proof}
 Apply Proposition \ref{tech} with $H=A$, $U=G$, $K_1=K$ and $K_2=1$.
 Deduce that there exists a subset $X$ of $A$ and two $(K,\epsilon)$-bounded numbers $\K$ and $m$ such that
$B=X^2$ is a $\K$-approximate subgroup while $|B\cap A|\ge |X|\ge \frac\epsilon 2 |A|$ and $|y^G| \leq m$  for every  $y\in \langle B \rangle$. 
Note also that 
\[ | B \cap A| \ge \frac{\epsilon}{2} |A| \ge  \frac{\epsilon}{2K} |A^2|\ge  \frac{\epsilon}{2K} |B|,\]
so we can take $\gamma={\epsilon}/({2K})$.
  
It follows from Lemma \ref{lem3} applied to the subgroup $\langle   B\rangle$ that  there exists a normal subgroup $ T\leq  G$ such that the index $[G:T]$  and the order of the commutator subgroup $[ T,\langle  B\rangle]$ are $m$-bounded. 
Since  $m$ is $( K, \epsilon)$-bounded, the result follows. 
\end{proof}

We now deal with Theorem \ref{main2}.  \medskip

{\bf Theorem \ref{main2}.} {\it If $A$ is a $K$-approximate subgroup of a finite group $G$ satisfying $\pr(A,A)\geq\epsilon>0$, then there are two $(\epsilon, K)$-bounded positive numbers $\gamma$ and $s$, and a subgroup $C\leq G$ such that $|C \cap A^2| > \gamma |A|$ and $|C'|\le s$. Moreover, $A$ is contained in the union of at most $\gamma^{-1}K^2$ left cosets of the  group $C$.}

\begin{proof}
 We apply Proposition \ref{tech} with $H=U=A$ and $K_1=K_2=K$ and deduce that there exists a subset $X$ of $A$ and two positive numbers $\K$ and $m$ depending only on $K$ and $\epsilon$ such that  $|X| \ge \frac\epsilon 2 |A|$, the set $B=X^2$ is a $\K$-approximate subgroup, and $|y^{A}|\leq m$ for every $y\in \langle B\rangle$.

As $|B^2|\le \K|B|$, it follows from  Lemma \ref{lem:conj1} (b) that for every $y\in\langle B\rangle$ we have
\begin{equation*} 
|C_{B}(y)|\ge \frac{|B|}{\K |y^X|} \ge
\frac{|B|}{\K |y^A|} \geq \eta|B|, 
\end{equation*}
where $\eta=1/({\K m})$. 
 Since  
 \begin{equation*} 
 |C_{B}(y)| \geq \eta|B|, 
 \end{equation*}
for every $y \in \langle B \rangle$, we have  
 \[ \pr(B,\langle B\rangle) =  \frac 1{| \langle B \rangle|}  \sum_{y\in  \langle B \rangle} \frac{|C_{B}(y)|}{ | B| } 
  \ge \eta. \]
 Now we apply Proposition \ref{tech} with $H=B$  and $U=\langle B\rangle$, where  $ K_1= \K$ and $K_2=1$, to deduce that there exists a 
  symmetric 
 subset $Y$ of $B$ and two positive numbers $ K_3$ and $n$ depending only on $\K$ and $\eta$ such that  $|Y| \ge \frac\eta 2 |B|$ and $Y^2$ is a $K_3$-approximate subgroup satisfying  $|y^{\langle B\rangle}| \leq n$  for every  $y\in \langle Y^2 \rangle$.

It follows from Theorem 1.1 in \cite{pavel} that the commutator subgroup of  the subgroup $\langle (Y^2)^{\langle B \rangle} \rangle$  has $(K_3, n)$-bounded order.  As  $\langle Y^2 \rangle =\langle Y \rangle$,  we deduce that  the order of the commutator subgroup of  $ \langle Y \rangle$ is $(K_3, n)$-bounded. 
 
 Taking into account that $Y \subseteq B=X^2 \subseteq A^2$, write
    \[  |A| \le \frac{2}{\epsilon} |X| \le   \frac{2}{\epsilon}  |X^2| \le \frac{2}{\epsilon} \left( \frac{2}{\eta} |Y| \right), \]
    and 
    \begin{equation}\label{7.1.5} |\langle Y \rangle \cap A^2| \ge | Y \cap A^2| =|Y|  \ge  \frac{\epsilon \eta} 4  |A| = \gamma |A|, 
    \end{equation}
    for $\gamma= \epsilon \eta/4$. 
    
    Finally, as      
    \[  |AY| \le |A^3| \le K^2 |A|  \le  K^2 \frac 4 { \epsilon \eta} |Y|,  \]
it follows from \cite[Lemma 2.4.4]{toint2} (Ruzsa's covering lemma) that there exists a subset $F\subseteq  A$ with $|F|\le 4K^2/(\epsilon \eta)$ such that $A \subseteq FY^2$. 
   Therefore  $A$ is contained in the union of at most $4K^2/(\epsilon \eta)$ left cosets of $\langle Y \rangle$    (note that this follows also from \eqref{7.1.5} and  Lemma 7.1.5 in \cite{toint2}).
	\end{proof}

\section{The converse statements } 

In this section we prove Proposition \ref{converse-main} and Proposition \ref{converse2}, which are roughly converse to Theorem \ref{main} and Theorem \ref{main2}, respectively.

We will also often use without mention the fact that if $S,T$ are subsets of a finite group $G$ and $g\in S$ then $|g^T|\le |[S,T]|$, because the map $g^T\to \{[g,x]\,| x\in T\}$ defined by $g^{x}\mapsto g^{-1}g^x$ is a bijection.
\medskip

{\bf Proposition \ref{converse-main}}. {\it
 Let   $A,B$ be subsets and $T$ a subgroup of a finite group $G$. Set $\gamma=|A\cap B| / |A|$,  $n=[G:T]$ and 
 $m=|[T,\langle B \rangle]|$. Then  $\pr(A,G)\geq \frac{\gamma}{nm}$. 
}

\begin{proof}
Note that
\[ |A\cap \langle B \rangle| \ge  |A\cap B| \ge \gamma |A|, \]  
so $ |A\cap \langle B \rangle|/|A| \ge \gamma$ and without loss of generality we can assume that $B$ is a subgroup.   
 Moreover,  if $g \in A \cap B$ , then
 \begin{eqnarray*} 
 [G:C_G(g)] \le [G:T]\,[T:C_T(G)]\le nm, 
	\end{eqnarray*}
 since $|[T, \langle B \rangle]|=m$.
 Therefore 
\begin{eqnarray*} 
\pr(A, G)&=&  \frac 1{|A|}  \sum_{a \in A} \frac{ |C_{G}(a) |}{  |G|}  \ge \frac 1{|A|} \sum_{g \in A \cap B} \frac{ |C_{G}(g) |}{ |G|} \\
 &\ge &  \frac{|A  \cap B|}{  nm |A| }  \ge \frac{\gamma}{nm}. 
\end{eqnarray*}
\end{proof} 
\medskip

{\bf Proposition \ref{converse2}.} {\it Let   $A$ be a $K$-approximate subgroup of a finite group $G$, and let $C\leq G$ be a subgroup. Set $\gamma=|C \cap A^2|/ |A|$ and $s=|C'|$.
 Then 
 \[\pr (A^2, A^2) \geq  \frac{\gamma^2}{K^4 s}.\]
}
\begin{proof} 
Since $|C \cap A^2| = \gamma |A|$,  
 it follows from  \cite[Lemma 7.1.5]{toint2} that $A$ is contained in the union of $n \le \gamma^{-1}K^2$ left cosets of $C$, say 
\[ A \subseteq \bigcup_{1 \le i \le n }  f_i C. \] 
As $A$ is symmetric, for every $a \in A$ we have $a^{-1} \in\bigcup_i  f_i C$, whence $a=(f_i c)^{-1}= c^{-1} f_i^{-1}$ for some $i\le n$ and $c\in C$. It follows that 
 \[ A \subseteq \bigcup_{1 \le i \le n }  C f_i^{-1}. \] 
Therefore, for every $g \in G$  we have 
\[|g^A| \le |g^{\bigcup_{1 \le i \le n }  C f_i^{-1}}| \le \sum_{i-1}^{n} |(g^C)^{f_i^{-1}}| = n |g^C|. 
\]
 When  $g \in  C \cap A^2$ we also have $|g^C| \le s$, since  $|C'|=s$,  
 hence 
\begin{equation}\label{1} 
\frac{1}{ |g^A|} \ge \frac{1}{ n s}. 
\end{equation}
Moreover, by Lemma \ref{lem:conj1} (b),   ${|g^A|} \ge |A|/{ |C_{A^2}(g) |}\ge |A^2|/(K  { |C_{A^2}(g) |} )$,  which gives 
\begin{equation}\label{2} 
\frac{ |C_{A^2}(g) |}{ |A^2|} \ge \frac{1}{ K |g^A|}.
\end{equation}
Now, by \eqref{2} and  \eqref{1}
\begin{eqnarray*} 
\pr(A^2,A^2)&=&   \frac 1{|A^2|}  \sum_{a \in A^2} \frac{ |C_{A^2}(a) |}{ |A^2|}  \ge  \frac 1{|A^2|} \sum_{g \in C \cap A^2} \frac{ |C_{A^2}(g) |}{ |A^2|} \\
&\ge &  \frac 1{|A^2|}  \sum_{g \in C \cap A^2}  \frac{1}{ K |g^A| } \\
&\ge &   \frac{|C \cap A^2|}{ K n s |A^2|}.
\end{eqnarray*}
  Moreover, as $|A^2| \le K|A|$, 
\[ |C \cap A^2|=\gamma |A| \ge \gamma  |A^2|/K. \] 
Therefore
\[ \pr(A^2,A^2) \ge 
\frac{|C \cap A^2|}{ K n s |A^2|} \ge \frac{ \gamma}{K^2 n s}.\]
 Since $n \le  \gamma^{-1}K^2$, we conclude that  $\pr(A^2,A^2) \ge{ \gamma^2}/({K^4 s}),$ as claimed. 
\end{proof} 

\noindent{\bf Acknowledgements.} The first and second authors are members of GNSAGA (INDAM), and the third author was  supported by  FAPDF and CNPq. 
The authors are grateful to the anonymous referee for  many helpful suggestions.

\end{document}